\newcommand{\customlabel}[2]{%
   \protected@write \@auxout {}{\string \newlabel {#1}{{#2}{\thepage}{#2}{#1}{}} }%
   \hypertarget{#1}{#2}}
\def\@tocline#1#2#3#4#5#6#7{\relax
  \ifnum #1>\c@tocdepth 
  \else
    \par \addpenalty\@secpenalty\addvspace{#2}%
    \begingroup \hyphenpenalty\@M
    \@ifempty{#4}{%
      \@tempdima\csname r@tocindent\number#1\endcsname\relax
    }{%
      \@tempdima#4\relax
    }%
    \parindent\z@ \leftskip#3\relax \advance\leftskip\@tempdima\relax
    \rightskip\@pnumwidth plus4em \parfillskip-\@pnumwidth
    #5\leavevmode\hskip-\@tempdima
      \ifcase #1
       \or\or \hskip 1em \or \hskip 2em \else \hskip 3em \fi%
      #6\nobreak\relax
    \dotfill\hbox to\@pnumwidth{\@tocpagenum{#7}}\par
    \nobreak
    \endgroup
  \fi}
\def\@tocline#1#2#3#4#5#6#7{\relax
  \ifnum #1>\c@tocdepth 
  \else
    \par \addpenalty\@secpenalty\addvspace{#2}%
    \begingroup \hyphenpenalty\@M
    \@ifempty{#4}{%
      \@tempdima\csname r@tocindent\number#1\endcsname\relax
    }{%
      \@tempdima#4\relax
    }%
    \parindent\z@ \leftskip#3\relax \advance\leftskip\@tempdima\relax
    \rightskip\@pnumwidth plus4em \parfillskip-\@pnumwidth
    #5\leavevmode\hskip-\@tempdima
      \ifcase #1
       \or\or \hskip 1em \or \hskip 2em \else \hskip 3em \fi%
      #6\nobreak\relax
    \dotfill\hbox to\@pnumwidth{\@tocpagenum{#7}}\par
    \nobreak
    \endgroup
  \fi}
\numberwithin{equation}{subsection}
\numberwithin{equation}{subsection}
\newtheorem{theorem}[subsection]{Theorem}
\newtheorem{lemma}[subsection]{Lemma}
\newtheorem{coro}[subsection]{Corollary}
\newtheorem{prop}[subsection]{Proposition}
\theoremstyle{definition}
\newtheorem{defn}[subsection]{Definition}
\newtheorem{algorithm}[subsection]{Algorithm}
\newtheorem{subroutine}[subsection]{Subroutine}
\newtheorem{exam}[subsection]{Example}
\newtheorem{remark}[subsection]{Remark}
\theoremstyle{remark}
\newcommand{\Cl}[1]{\text{Cl}#1}
\newcommand{\ZZ}{\mathbb Z}
\def\quotient#1#2{\raise1ex\hbox{$#1$}{\Large/} \lower1ex\hbox{$#2$}}
\newcommand{\halp}[1]{\textcolor{red}{#1}}
\DeclareMathOperator{\Frac}{Frac}
\DeclareMathOperator{\Hom}{Hom}
\DeclareMathOperator{\rank}{rank}
\DeclareMathOperator{\tors}{tors}
\DeclareMathOperator{\img}{Im}
\DeclareMathOperator{\Spec}{Spec}
\DeclareMathOperator{\Div}{Div}
\DeclareMathOperator{\divs}{div}
\DeclareMathOperator{\Trop}{Trop}
\DeclareMathOperator{\Proj}{Proj}
\begin{document}

\title{Computing Unit Groups of Curves}

\author{Justin Chen}
\email{jchen@math.berkeley.edu}
\author{Sameera Vemulapalli}
\email{sameerav@princeton.edu}
\author{Leon Zhang}
\email{leonyz@math.berkeley.edu}

\begin{abstract}
  The group of units modulo constants of an affine variety over an algebraically closed field is free abelian of finite rank. Computing this group is difficult but of fundamental importance in tropical geometry, where it is desirable to realize intrinsic tropicalizations. We present practical algorithms for computing unit groups of smooth curves of low genus. Our approach is rooted in divisor theory, based on interpolation in the case of rational curves and on methods from algebraic number theory in the case of elliptic curves.
  \end{abstract}
\date{\today}
\maketitle

\section{Introduction}

Among the invariants of a commutative ring, the group of units is one of the most fundamental. However, explicit computation of this group is difficult, and even its structure remains mysterious in general \cite{Fuchs}. To date, most progress has centered on rings of integers of algebraic number fields, or localizations thereof, driven by a need for practical algorithms in computational number theory \cite{cohenbook}. These results rely fundamentally on Dirichlet's unit theorem, which describes the group of units, modulo torsion, of a number field as a free abelian group of finite rank specified by simple invariants of the number field.

An analogous theorem of Samuel \cite{samuel} states that for a finitely generated domain over an algebraically closed field, the group of units, modulo scalars, is free abelian of finite rank. In contrast to the number field case, no formula for the rank is known. Given the coordinate ring of a very affine variety, a basis for its unit group yields an embedding of the variety into its so-called \emph{intrinsic torus} \cite{tropicalbook}. In tropical geometry, this embedding of a very affine variety into its intrinsic torus realizes its \emph{intrinsic tropicalization}, from which all other tropicalizations can be recovered. However, explicitly computing the intrinsic tropicalization is difficult, because one must first compute the unit group.

In this work we describe effective methods for computing unit groups of smooth very affine curves of low genus. Our methods rely on divisor theory for projective varieties: we embed the unit group of a very affine variety into the Weil divisor group of the projective closure, and study the cokernel of this embedding as a subgroup of the divisor class group. This allows us to give algorithms for computing unit groups of rational normal curves and elliptic curves:

\begin{restatable}{thmm}{rationalnormalcurvesoln}
\label{rationalnormalcurvesoln}
Let $\overline{C} \subseteq \mathbb{P}^n_k$ be a rational normal curve over an algebraically closed field $k$, given parametrically as the image of a map $\mathbb{P}^1_k \hookrightarrow \mathbb{P}^n_k$. Let $C := \overline{C} \cap \mathbb{T}^n$ be the corresponding very affine curve, with coordinate ring $R$. Then \Cref{rationalcurvealg} correctly computes a $\mathbb Z$-basis of $R^*/k^*$.
\end{restatable}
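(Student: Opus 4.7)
The plan is to reduce everything to divisor theory on $\mathbb{P}^1$. Write the parametrization as $\varphi = [f_0 : \cdots : f_n] \colon \mathbb{P}^1_k \to \mathbb{P}^n_k$, where each $f_i$ is a homogeneous form of degree $n$ in coordinates $[s:t]$ on $\mathbb{P}^1$; since $\overline{C}$ is a rational normal curve, the $f_i$ form a basis of the space of degree-$n$ forms. Identifying $\overline{C}$ with $\mathbb{P}^1$ via $\varphi$, the preimage of the boundary $\mathbb{P}^n \setminus \mathbb{T}^n$ is the zero locus $S \subseteq \mathbb{P}^1$ of the product $f_0 f_1 \cdots f_n$, so $C \cong \mathbb{P}^1 \setminus S$ as affine varieties.

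Since $\mathbb{P}^1$ is a smooth projective curve with $\Pic^0(\mathbb{P}^1) = 0$, the divisor map yields an isomorphism
\[
\divs\colon R^*/k^* \xrightarrow{\sim} \{D \in \Div(\mathbb{P}^1) : \supp(D) \subseteq S,\ \deg D = 0\},
\]
so $R^*/k^*$ is free abelian of rank $|S|-1$. Fixing any base point $p_0 \in S$, the divisors $(p)-(p_0)$ for $p \in S \setminus \{p_0\}$ form a $\mathbb{Z}$-basis of the right-hand side, and each admits an essentially unique preimage under $\divs$ given by the linear fractional function $g_p(t) = (t - a_p)/(t - a_{p_0})$ (with the obvious modification if $p_0$ or $p$ equals $\infty$). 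Such $g_p$ lies in $R^*$ because its divisor is supported on $S$.

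The remaining task is to express each $g_p$ as an explicit element of the coordinate ring of $C$ inside $k[x_1^{\pm 1}, \ldots, x_n^{\pm 1}]$. Since the $f_i$ span all degree-$n$ forms on $\mathbb{P}^1$, we can multiply numerator and denominator of $g_p$ by any common auxiliary degree-$(n-1)$ form $h$ and then write $(t-a_p)\,h(t)$ and $(t-a_{p_0})\,h(t)$ as explicit linear combinations of the $f_i$ via a finite-dimensional linear-algebra inversion; the quotient pulls back along $\varphi$ to a ratio of linear forms in $x_0, \ldots, x_n$, giving $g_p$ as an element of $R$. I expect the main obstacle in verifying \Cref{rationalcurvealg} to lie in this final identification step: checking that the algorithm's output, expressed in the ambient torus coordinates, genuinely represents the abstract basis $\{g_p\}$, and doing so uniformly across degenerate configurations where several $f_i$ share zeros and $|S|$ drops below the generic value $n(n+1)$. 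Correctness of the algorithm then follows by combining (a) independence of the $\{g_p\}$ modulo $k^*$, which is immediate from the linear independence of the divisors $(p)-(p_0)$, and (b) the rank count $|S|-1$ from the exact sequence above, so that any $|S|-1$ independent units automatically generate $R^*/k^*$.
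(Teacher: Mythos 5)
Your skeleton is the same as the paper's: identify $\overline{C}$ with $\mathbb{P}^1$ via the parametrization, use $\Cl^0(\mathbb{P}^1)=0$ together with the exact sequence \eqref{mainexactsequence} to conclude that $\phi\colon R^*/k^*\to\Div^0_\partial(\overline{C})$ is an isomorphism, choose a basis of boundary divisors, and interpolate each one by a ratio of products of linear forms $b_iS-a_iT$ on $\mathbb{P}^1$. (The paper allows an arbitrary basis of $\Div^0_\partial(\overline{C})$ rather than your $(p)-(p_0)$, but that is immaterial.) Where you diverge is the step you yourself flag as the obstacle: converting the rational function on $\mathbb{P}^1$ into an actual element of $R\subseteq k[x_1^{\pm1},\dots,x_n^{\pm1}]$. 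Your proposal --- multiply numerator and denominator by an auxiliary degree-$(n-1)$ form and invert the linear map sending coefficients to the basis $f_0,\dots,f_n$ --- produces a ratio of linear forms in $x_0,\dots,x_n$, which is a rational function on $\overline{C}$ but not yet a Laurent polynomial representative in $R$; you would still need a denominator-clearing step such as \Cref{computingRepOfUnit}. The paper instead handles this in one shot with \Cref{subalgmembership}, a Gr\"obner-basis subalgebra-membership computation in the algebra generated by $f_0,\dots,f_n$ and their inverses, which directly outputs a Laurent polynomial $\gamma$ with $f/g=\overline{\gamma}(f_0,\dots,f_n)$. Note that the success of this step is not in doubt: since $\phi$ is an isomorphism, every element of $\Div^0_\partial(\overline{C})$ is a priori the divisor of some unit of $R$, so the membership test cannot return \textsc{false}; this also disposes of your worry about degenerate configurations where the $f_i$ share zeros, since the argument never uses $|S|=n(n+1)$. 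So your approach is correct and essentially the paper's, but to count as a complete proof of the theorem as stated --- which asserts correctness of \Cref{rationalcurvealg} specifically --- you would need to either verify the subalgebra-membership step the algorithm actually performs, or supply the denominator-clearing argument your linear-algebra variant still requires.
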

\begin{restatable}{thmm}{ellipticcurveqonesoln}
\label{ellipticcurveqonesoln}
Let $k = \overline{\mathbb{Q}}$, let $\overline{E} \subseteq \mathbb{P}^2_k$ be an elliptic curve, and let $E := \overline{E} \cap \mathbb{T}^2$ be the corresponding very affine elliptic curve with coordinate ring $R$. Then \Cref{ellcurvealg} correctly computes a $\mathbb Z$-basis of $R^*/k^*$.
\end{restatable}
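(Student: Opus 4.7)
The plan is to adapt the divisor-theoretic strategy used for rational curves. The starting point is the left-exact sequence
\[
1 \to k^* \to R^* \xrightarrow{\divs} \Div_\partial(\overline{E}) \to \Pic(\overline{E}),
\]
where $\Div_\partial(\overline{E})$ is the free abelian group on the boundary points $\overline{E} \setminus E$, the second arrow sends a unit to its divisor (which must be supported on the boundary, as the function is invertible on $E$), and the third sends a divisor to its class. Since a rational function on the smooth projective curve $\overline{E}$ with trivial divisor is constant, we get
\[
R^*/k^* \;\cong\; \ker\bigl(\Div_\partial(\overline{E}) \to \Pic(\overline{E})\bigr),
\]
and computing this kernel is the whole game.

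First I would enumerate the boundary: since $\mathbb{T}^2 \subseteq \mathbb{P}^2$ is cut out by removing the three coordinate hyperplanes, B\'ezout implies that $\overline{E}\setminus E$ consists of at most $N \le 9$ points $P_1,\ldots,P_N$, found explicitly by intersecting the defining cubic of $\overline{E}$ with the three axes. Next I would use the standard isomorphism $\Pic(\overline{E}) \cong \mathbb{Z} \oplus E(k)$ (via degree together with Abel--Jacobi with respect to a chosen origin $O \in E$) to rewrite the kernel condition: a divisor $D = \sum n_i P_i$ maps to zero in $\Pic(\overline{E})$ precisely when $\sum n_i = 0$ and $\sum n_i P_i = O$ in the group law on $E$. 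The first condition singles out a rank-$(N-1)$ sublattice of $\mathbb{Z}^N$, and the second defines an explicit $\mathbb{Z}$-linear map from this sublattice to $E(K)$, where $K \subset \overline{\mathbb{Q}}$ is the number field generated by the coordinates of the $P_i$.

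The crux is therefore to compute the kernel of a homomorphism $\mathbb{Z}^{N-1} \to E(K)$ into a finitely generated abelian group (Mordell--Weil). This breaks into three subproblems: (i) compute generators and torsion of $E(K)$; (ii) express each boundary point $P_i$ as an integer combination of these generators modulo torsion; and (iii) carry out the resulting kernel computation via Smith normal form. The main obstacle is step (i): no unconditional algorithm for Mordell--Weil groups is known, but $2$-descent combined with height-based point searches (as implemented in \textsf{Magma}, \textsf{Sage}, and \textsf{pari/gp}) is reliable in practice, and I expect \Cref{ellcurvealg} to invoke such a subroutine. Conditional on its correctness, steps (ii) and (iii) reduce to routine linear algebra over $\mathbb{Z}$.

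Finally, to output each abstract kernel generator as an honest rational function rather than a formal divisor, I would invoke Riemann--Roch on the elliptic curve. For a kernel element $D = \sum n_i P_i$ with positive part $D_+$ and negative part $D_-$, the desired unit spans (up to $k^*$) the one-dimensional subspace of $L(D_+)$ consisting of sections vanishing to prescribed orders along $D_-$; since Riemann--Roch on an elliptic curve is explicit, this amounts to solving a finite linear system. Collecting these steps establishes correctness of \Cref{ellcurvealg}, with the Mordell--Weil computation being the sole nontrivial ingredient.
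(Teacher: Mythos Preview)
Your overall framework matches the paper: the exact sequence identifying $R^*/k^*$ with the kernel of $\Div_\partial^0(\overline{E}) \to \Cl^0(\overline{E})$, the translation of that kernel into relations among the boundary points in the group law on $\overline{E}$, and the final interpolation step are all as in the paper. Where you diverge is in how the relation lattice is actually computed, and this is the one place the paper contributes a nontrivial idea.

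You propose to compute the full Mordell--Weil group $E(K)$, write each boundary point in a fixed basis, and take a kernel over $\mathbb{Z}$. As you yourself note, step~(i) has no known unconditional algorithm, so your proof of correctness is conditional on the termination of a descent/search procedure. The paper's \Cref{ellcurvealg} sidesteps this entirely: it never computes $E(K)$. Instead, \Cref{ellcurvep1} separates the boundary points into torsion and nontorsion using the N\'eron--Tate height (a point is torsion iff $\hat h = 0$), then forms the height-pairing Gram matrix $A_{ij} = \langle Q_i, Q_j\rangle$ on the nontorsion points. Because the height pairing is positive definite on $E(K)\otimes\mathbb{R}$, the integer kernel of $A$ is exactly the lattice of relations among the $Q_i$ \emph{modulo torsion}. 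A finite search (bounded via Mahler--Weyl, \Cref{weylthm}) then lifts these to genuine relations, and torsion relations are handled by brute force. This yields the relation lattice among the given points without ever needing generators of the ambient Mordell--Weil group. So your proof sketch does not actually establish what the stated algorithm does; the height-pairing trick is the missing ingredient.

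A smaller difference: for the interpolation step you invoke Riemann--Roch abstractly, whereas the paper uses an explicit Miller-type reduction (\Cref{ellcurvep2}) that repeatedly subtracts divisors of lines through pairs of points until the divisor collapses. Both are valid; the paper's version is just more constructive for this genus.
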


We briefly describe the structure of the paper. The basics of Samuel's theorem and intrinsic tropicalizations are discussed in \Cref{problem}. In \Cref{preliminaryresults} we develop the relationship between our problem and the geometry of boundary divisors, and describe a simple algorithm for interpolating divisors of rational functions in terms of Laurent polynomials, when possible. We consider the families of Fermat curves and plane conics in \Cref{planequadrics}, and rational normal curves in parametric form in \Cref{rationalnormalcurves}. Finally, we discuss elliptic curves in \Cref{ellipticcurves}.

Many of our algorithms have been implemented in Macaulay2 \cite{M2}, Singular \cite{DGPS}, and Sage \cite{sagemath}. Our code for the examples in this paper can be found at our supplementary materials website:

\begin{center}{\url{https://math.berkeley.edu/~leonyz/code/}}
\end{center}

\subsection{Acknowledgements}
Leon Zhang and Sameera Vemulapalli would like to thank the Max Planck Institute for Mathematics in the Sciences for its hospitality while working on this project. Leon Zhang was supported by a National Science Foundation Graduate Research Fellowship.

The authors thank Bernd Sturmfels for suggesting and advising this project. We would also like to thank Chris Eur and Martin Helmer for helpful discussions, Yue Ren for generous and thoughtful help in computing tropicalizations in Singular \cite{DGPS}, and Bjorn Poonen and Ronald van Luijk for their expertise and guidance.

\section{Background}

\label{problem}

We begin by stating the problem in a general setting. Let $k$ be an algebraically closed field, and let $R$ be a finitely generated $k$-algebra which is a domain. The inclusion $k \subseteq R$ induces a short exact sequence of multiplicative abelian groups
\begin{equation} \label{eq:1}
1 \xrightarrow{\qquad} k^* \xrightarrow{\qquad} R^* \xrightarrow{\qquad} R^*/k^* \xrightarrow{\qquad} 1
\end{equation}

Our goal is to compute, as explicitly as possible, the group $R^*/k^*$. Although this may seem to be a purely algebraic problem, the key to progress is to use insights from geometry, particularly divisor theory on projective varieties. Thus, writing $R = k[x_1, \ldots, x_n]/I$ as a quotient of a polynomial ring by a prime ideal $I$, set $X := \Spec R \subseteq \mathbb{A}^n_k$, the affine variety corresponding to $R$, and let $\overline{X} \subseteq \mathbb{P}^n_k$ denote the projective closure of $X$ in projective $n$-space. Write $\partial X := \overline{X} \setminus X = \overline{X} \cap V(x_0)$ for the boundary of $\overline{X}$, which is the intersection of $\overline{X}$ with the hyperplane at infinity in $\mathbb{P}^n_k$.

The main point is that a unit in $R$ corresponds, via homogenization, exactly to a rational function on $\overline{X}$ which has zeros and poles only on $\partial X$. To be precise:

\begin{lemma} \label{homogenizationLemma}
With notation as above, let $R \to \overline{R}$ be the homogenization map $f \mapsto \overline{f} := x_0^{\deg f}f (\frac{x_i}{x_0})$. Then:

i) For any $f, g \in R$, $\overline{fg} = \overline{f} \overline{g}$, and

ii) $f \in R^*$ if and only if $V(\overline{f}) \cap \overline{X} \subseteq \partial X$. 
\end{lemma}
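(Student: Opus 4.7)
Part (i) should follow from a short degree computation. Working with polynomial representatives $f, g \in k[x_1, \ldots, x_n]$, which lives in a domain, we have $\deg(fg) = \deg f + \deg g$. Then
\[
\overline{fg} \;=\; x_0^{\deg f + \deg g}\, (fg)\!\left(\tfrac{x_i}{x_0}\right) \;=\; \Big(x_0^{\deg f} f\!\left(\tfrac{x_i}{x_0}\right)\Big)\Big(x_0^{\deg g} g\!\left(\tfrac{x_i}{x_0}\right)\Big) \;=\; \overline{f}\,\overline{g},
\]
since the substitution $x_i \mapsto x_i/x_0$ is a ring homomorphism into the Laurent polynomial ring.

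For (ii), my plan is to translate the algebraic condition ``$f \in R^*$'' into the geometric statement ``$V(f) \cap X = \emptyset$'' via the weak Nullstellensatz, then match this with the right-hand side on the standard affine chart. Since $k$ is algebraically closed and $R$ is a finitely generated $k$-algebra, the Nullstellensatz gives
\[
f \in R^* \iff (f) = R \iff V(f) \cap X = \emptyset.
\]
Identifying $X$ with $\overline{X} \cap D(x_0)$ via the standard affine chart $D(x_0) \cong \mathbb{A}^n_k \subset \mathbb{P}^n_k$, the defining property of the homogenization is that $\overline{f}$ restricts to $f$ on $D(x_0)$, so $V(\overline{f}) \cap \overline{X} \cap D(x_0) = V(f) \cap X$. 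Since $\overline{X} = (\overline{X} \cap D(x_0)) \sqcup \partial X$, the containment $V(\overline{f}) \cap \overline{X} \subseteq \partial X$ is equivalent to the left-hand side being empty, which by the Nullstellensatz step is in turn equivalent to $f \in R^*$.

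The only mild subtlety I anticipate is that $f \mapsto \overline{f}$ is not literally a ring homomorphism from $R$ to $\overline{R}$: it depends on the choice of polynomial representative, up to multiplication by a power of $x_0$ modulo the homogeneous ideal of $\overline{X}$. However, (i) is a statement about specific chosen representatives, and (ii) concerns only the zero locus of $\overline{f}$ on $\overline{X}$, which is insensitive to such $x_0$-scaling once restricted to $D(x_0) = \overline{X} \setminus \partial X$. So I expect this ambiguity to cause no real obstacle, and no single step to be genuinely hard.
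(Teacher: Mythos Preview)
Your argument is correct. For part (i) you compute directly with the definition, whereas the paper argues indirectly: it observes that dehomogenization (setting $x_0=1$) is a ring map whose kernel contains no nonzero homogeneous elements, so two homogeneous polynomials of the same degree with the same dehomogenization must coincide, and then applies this to $\overline{fg}$ and $\overline{f}\,\overline{g}$. Your computation is shorter; the paper's uniqueness principle is a slightly more reusable lemma.

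For part (ii) the difference is more substantial. You give a clean geometric argument via the weak Nullstellensatz: $f\in R^*$ iff $V(f)\cap X=\emptyset$, and on the chart $D(x_0)$ the homogenization $\overline{f}$ restricts to $f$, so $V(\overline{f})\cap\overline{X}\subseteq\partial X$ is exactly the condition that $V(\overline{f})\cap\overline{X}$ miss $D(x_0)$. The paper instead works explicitly with ideals: choosing a Gr\"obner basis $g_1,\ldots,g_r$ of $I$ so that $(\overline{g_1},\ldots,\overline{g_r})$ cuts out $\overline{X}$, it shows directly that $1\in(f,g_1,\ldots,g_r)$ iff $x_0\in\sqrt{(\overline{f},\overline{g_1},\ldots,\overline{g_r})}$, by homogenizing a relation $1-fh\in I$ to get $x_0^d\in(\overline{f},\overline{g_1},\ldots,\overline{g_r})$. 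Your route is more conceptual and avoids Gr\"obner bases entirely; the paper's route is more in keeping with its later computational emphasis and yields an explicit certificate (a power of $x_0$ in the homogeneous ideal). Both are complete.

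Your closing remark about the dependence of $\overline{f}$ on representatives is apt and correctly identifies why it does not affect either statement.
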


\begin{proof} 
First, note that dehomogenization is evaluation at $x_0 = 1$, hence is a ring map with kernel $(x_0 - 1)$. As the kernel contains no nonzero homogeneous elements, it follows that if $f_1, f_2$ are homogeneous of the same degree with the same dehomogenization, then $f_1 = f_2$. 

i) Since $\overline{fg}$ and $\overline{f}\overline{g}$ are both homogeneous of the same degree and dehomogenize to $fg$, by the above reasoning they must be equal.

ii) Recall that $\partial X=\overline X \cap V(x_0)$, so it suffices to show that $f \in R^*$ if and only if $V(\overline{f}) \cap \overline{X} \subseteq V(x_0)$. If $g_1, \ldots, g_r$ is a Gr\"obner basis for the defining ideal $I$ of $X$, then $\overline{X}$ has defining ideal $(\overline{g_1}, \ldots, \overline{g_r})$ \cite[Prop. 15.31]{Eisenbud}. It thus suffices to show $1 \in (f, g_1, \ldots, g_r)$ if and only if $x_0 \in \sqrt{(\overline{f}, \overline{g_1}, \ldots, \overline{g_r})}$. The ``if" direction follows by dehomogenizing. For the ``only if'' direction, pick $h$ with $1 - fh \in I$. Then $\overline{1 - fh} \in (\overline{g_1}, \ldots, \overline{g_r})$. But $\overline{1 - fh} = x_0^d - \overline{fh}$, where $d = \deg(fh)$, as both sides are homogeneous and dehomogenize to $1 - fh$. By (i) therefore, $x_0^d \in (\overline{f}, \overline{g_1}, \ldots, \overline{g_r})$ as desired.
\end{proof}

Suppose now that $\overline{X}$ is normal, and write $\Div(\overline{X})$ (resp. $\Cl(\overline{X})$) for the group of Weil divisors (resp. the divisor class group) on $\overline{X}$. Let $\Div^0(\overline{X})$ (resp. $\Cl^0(\overline{X})$) denote the subgroup of divisors (resp. divisor classes) of degree zero.

\begin{defn} \label{div0Boundary}
We define 
\[
\Div_{\partial}^0(\overline{X}) := \left\{ \sum_{\textup{finite}} a_i P_i \; \Big| \; P_i {\text{ component of }} \partial X, a_i \in \mathbb{Z}, \sum a_i = 0 \right\} \subseteq \Div^0(\overline{X})
\]
i.e. the subgroup of $\Div^0(\overline{X})$ supported on $\partial X$. This makes sense since $\partial X$ has codimension 1 in $\overline X$.
\end{defn}

Now, homogenization gives a natural map $R^* \hookrightarrow \Frac(\overline{R})^*$, which is a homomorphism of multiplicative groups by \Cref{homogenizationLemma}(i). Composing with the natural map $\Frac(\overline{R})^* \to \Div^0(\overline{X})$, $f \mapsto \divs(f)$ gives a homomorphism $\widetilde{\phi} : R^* \to \Div^0(\overline{X})$ from a multiplicative abelian group to an additive abelian group. Since a unit is a rational function which is invertible on $X$, hence has zeros and poles only on $\partial X$ by \Cref{homogenizationLemma}(ii), this shows that the image of $\widetilde{\phi}$ is contained in $\Div^0_\partial(\overline{X})$. Next, the kernel of $\widetilde{\phi}$ consists of units whose associated rational function has no zeros or poles anywhere on $\overline{X}$. Such an element must be a scalar, i.e. comes from $k^*$, so we have an induced map $\phi : R^*/k^* \hookrightarrow \Div_{\partial}^0(\overline{X})$.

Putting the above reasoning together yields a classical theorem of Samuel \protect{\cite{samuel}} on the structure of the unit group: 

\begin{theorem}[\protect{\cite{samuel}}]
\label{samuelthm}
Let $k$ be an algebraically closed field, and let $R$ be a finitely generated $k$-algebra that is a domain. Then $R^*/k^*$ is a finitely generated free abelian group. 
\end{theorem}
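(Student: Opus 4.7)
The plan is to extract Samuel's theorem from the embedding $\phi : R^*/k^* \hookrightarrow \Div_\partial^0(\overline{X})$ that the preceding discussion has essentially already built, together with the observation that $\Div_\partial^0(\overline{X})$ sits inside a free abelian group of finite rank given by the components of $\partial X$. So structurally the proof has two pieces: first reduce to the case where the divisor-theoretic setup applies, and then make a finite-rank count.

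First, I would reduce to the case where $\overline{X}$ is normal. Given an arbitrary finitely generated $k$-algebra domain $R$, let $\widetilde{R}$ be its integral closure in $\Frac(R)$. By finiteness of normalization for domains of finite type over a field, $\widetilde{R}$ is again a finitely generated $k$-algebra, and the inclusion $R \hookrightarrow \widetilde{R}$ induces an injection $R^*/k^* \hookrightarrow \widetilde{R}^*/k^*$. Subgroups of finitely generated free abelian groups are finitely generated free abelian, so it suffices to prove the theorem for $\widetilde{R}$. After re-embedding into some $\mathbb{P}^n_k$ if necessary, I may assume the projective closure $\overline{X}$ is normal, so that the Weil divisor group $\Div(\overline{X})$ and the setup of \Cref{div0Boundary} are available.

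Now apply the construction preceding the theorem to get the injection $\phi : R^*/k^* \hookrightarrow \Div_\partial^0(\overline{X})$. The key observation is that $\partial X = \overline{X} \cap V(x_0)$ is cut out of the projective variety $\overline{X}$ by a single hyperplane equation, so either $\partial X = \overline{X}$ (in which case $X = \emptyset$ and there is nothing to prove) or, by Krull's principal ideal theorem, $\partial X$ is pure of codimension one. In any event $\partial X$ is a closed subscheme of the Noetherian scheme $\overline{X}$, hence has only finitely many irreducible components $P_1,\dots,P_r$. By definition
\[
\Div_\partial^0(\overline{X}) \subseteq \Div_\partial(\overline{X}) = \bigoplus_{i=1}^{r} \mathbb{Z}\, P_i,
\]
which is a finitely generated free abelian group of rank $r$. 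Since $R^*/k^*$ injects into this group, and since every subgroup of a finitely generated free abelian group is itself finitely generated free abelian, the theorem follows.

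The main obstacle, such as it is, lies in the reduction step: one must know that the normalization of a finitely generated domain over a field is again a finitely generated algebra, and that normalizing the projective closure does not disturb the affine part of the picture in a way that changes $R^*/k^*$. Both facts are standard (the finiteness of normalization is a classical result of commutative algebra, and the map on unit groups is injective because $R \hookrightarrow \widetilde R$ is an inclusion of rings with the same field of fractions), so the remainder of the argument is really just the finite-rank observation above.
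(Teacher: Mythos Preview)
Your proof is correct and follows essentially the same route as the paper: reduce to the normal case, then use the injection $\phi\colon R^*/k^*\hookrightarrow \Div_\partial^0(\overline{X})$ into a finitely generated free abelian group. One small imprecision worth flagging: in the reduction step you write ``after re-embedding into some $\mathbb{P}^n_k$ if necessary, I may assume the projective closure $\overline{X}$ is normal,'' but it is not clear that a choice of affine generators for $\widetilde{R}$ exists whose projective closure is already normal. The paper (and your own final paragraph) handles this more cleanly by normalizing the projective closure $\overline{X}$ itself and observing that the preimage of $X$ is $\Spec(\widetilde{R})$; this directly produces a normal projective compactification of $\Spec(\widetilde{R})$ with boundary of pure codimension one, which is all the divisor argument needs.
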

\begin{proof}
Let $\overline{R}$ be the homogenization of R with respect to some new variable $x_0$. If $\overline{X} = \Proj(\overline{R})$ is normal, then the reasoning above shows that $R^*/k^*$ embeds in the finitely generated free abelian group $\Div^0_\partial(\overline{X})$, and subgroups of finitely generated free abelian groups are again finitely generated free abelian.

If $\overline{X}$ is not normal, let $\widetilde{\overline{X}}$ be the normalization of $\overline{X}$. The normalization map $\widetilde{\overline{X}} \xrightarrow{\eta} \overline{X}$ identifies $\eta^{-1}(X)$ with $\Spec(\widetilde{R})$, where $\widetilde{R}$ is the integral closure of $R$ in its fraction field. This gives an inclusion map $R^*/k^* \hookrightarrow (\widetilde{R})^*/k^*$. As $(\widetilde{R})^*/k^*$ is finitely generated free abelian by the previous case, $R^*/k^*$ is as well.
\end{proof}

\begin{remark}
Note that the unit group of the coordinate ring of a projective variety is trivial to compute: indeed, in this case $\overline{R}^* = k^*$, as any positively graded domain has units concentrated in degree $0$. Thus \Cref{samuelthm} is only interesting for rings which are not positively graded.
\end{remark}

\begin{remark}
The assumptions in \Cref{samuelthm} are necessary: if $k$ is not algebraically closed, then the unit group modulo scalar units may have torsion, i.e. roots of unity. If $R$ is not a domain, then $R^*/k^*$ need not be $\mathbb{Z}$-free: e.g. $R = k[x]/(x^2)$ has $R^*/k^*$ isomorphic to the additive group of $k$.
\end{remark}

\begin{remark}
In the setting of \Cref{samuelthm}, the exact sequence \eqref{eq:1} splits (since $R^*/k^*$ is free abelian), i.e. $R^* \cong k^* \oplus R^*/k^*$. Thus we also understand $R^*$ if we understand $R^*/k^*$.
\end{remark}

\subsection{Intrinsic tropicalization}
We now discuss some motivation for computing unit groups coming from tropical geometry, following the presentation in \cite{tropicalbook}. Recall that a variety $X$ is said to be \emph{very affine} if $X$ admits a closed embedding into an algebraic torus $\mathbb{T}$. Intuitively, a subvariety of $\mathbb P^m$ is affine if it misses a coordinate hyperplane, and very affine if it misses all coordinate hyperplanes. Algebraically, this means that the coordinate ring $R$ of $X$ is (isomorphic to) a quotient of a Laurent polynomial ring $k[x_1^\pm, \ldots, x_m^\pm]$. We note that given a very affine variety $X \subseteq \mathbb{T}^n$, one can take its projective closure $\overline{X} \subseteq \mathbb{P}^n$ with boundary $\partial X := \overline X \setminus X = \overline X \cap V(x_0\cdots x_n)$, and the above discussion (cf. \Cref{homogenizationLemma}, \Cref{div0Boundary}) carries over to this setting.

In general, there are many different closed embeddings of $X$ into tori $\mathbb{T}^m$ for various $m$. To remove the dependence on the choice of embedding, one must choose a ``natural" embedding of $X$ into a fixed torus. As it turns out, the right object to consider is the so-called \textit{intrinsic torus} of $X$, which is by definition \protect{\cite[Definition~6.4.2]{tropicalbook}}
\[
\mathbb{T}_{in} := \Hom_{\mathbb{Z}}(R^*/k^*, k^*).
\]
Note that by \Cref{samuelthm}, $R^*/k^*$ is free abelian, so the Hom group is isomorphic to a product of copies of $k^*$, which is an algebraic torus over $k$. A $\mathbb{Z}$-basis $f_1, \ldots, f_n$ of $R^*/k^*$ gives rise to an embedding $i : X \hookrightarrow T_{in}$, via $x \mapsto (f_1(x), \dots, f_n(x))$. With such a choice of basis, the importance of the intrinsic torus is immediate from the following ``pseudo-universal" property (cf. \protect{\cite[Proposition~6.4.4]{tropicalbook}}): for every closed embedding $j \colon X \xhookrightarrow{} \mathbb{T}^m$ of $X$ into a torus, there is a map of tori $\varphi: \mathbb{T}_{in} \rightarrow \mathbb{T}^m$ given by Laurent monomials (which need not be an embedding) such that the following diagram commutes: 

\[
\begin{tikzcd}
X \arrow[r, hook, "i"] \arrow[dr, hook, "j"]
& \mathbb{T}_{in} \arrow[d, "\varphi"]\\
& \mathbb{T}^m
\end{tikzcd}
\]

It is a basic task in tropical geometry to tropicalize a very affine variety with respect to a particular embedding in a torus. From a foundational viewpoint, it is desirable to have an \emph{intrinsic tropicalization}, with respect to the intrinsic torus, so that the tropicalization depends only on the very affine variety $X$ and not the specific embedding $X \hookrightarrow \mathbb{T}^m$. Furthermore, in the setup of the commutative diagram above, the tropicalization of $X$ embedded in $\mathbb T^m$ is given by the image of the intrinsic tropicalization under the affine map $\Trop(\varphi)$. Hence any other tropicalization of $X$ can be recovered from the intrinsic tropicalization.

However, from a computational standpoint, the very affine variety is most often described by its ideal in a fixed embedding. To obtain an intrinsic tropicalization one must be able to compute the defining ideal of the very affine variety in its intrinsic torus; the key to doing so is to first compute a basis of $R^*/k^*$. 
Of course an embedding $i$ into the intrinsic torus depends on our choice of basis for $R^*/k^*$, but we nevertheless often speak of \emph{the} intrinsic embedding into the intrinsic torus. 

\section{General results on varieties}
\label{preliminaryresults}

In this section we reinterpret our problem in the context of class groups. We retain the setup from the previous section: let $X$ be a very affine variety over an algebraically closed field $k$, with coordinate ring $R$.

\begin{defn}
Define $\Cl_{\partial}^0(\overline{X})$ to be the cokernel of the group homomorphism $R^*/k^* \xrightarrow{\quad \phi \quad} \Div_{\partial}^0(\overline{X})$. 
\end{defn}

By definition, there is a short exact sequence of abelian groups

\begin{equation} \label{mainexactsequence}
1 \xrightarrow{\qquad} R^*/k^* \xrightarrow{\quad \phi \quad} \Div_{\partial}^0(\overline{X}) \xrightarrow{\qquad} \Cl_{\partial}^0(\overline{X}) \xrightarrow{\qquad} 0
\end{equation}

\begin{coro} \label{firstUpperBound}
Let $r$ be the number of divisorial components of $\partial X$. Then $\rank R^*/k^* \leq r - 1$, with equality if and only if $\Cl^0_\partial(\overline{X})$ is torsion.
\end{coro}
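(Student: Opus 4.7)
The plan is to extract both parts of the statement directly from the short exact sequence \eqref{mainexactsequence} by a rank calculation, using Samuel's theorem.

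The first step is to compute the rank of the middle term $\Div^0_\partial(\overline{X})$. Writing $P_1, \ldots, P_r$ for the divisorial components of $\partial X$, the group $\bigoplus_{i=1}^r \mathbb{Z}\cdot P_i$ is free abelian of rank $r$, and $\Div^0_\partial(\overline{X})$ is by definition the kernel of the augmentation $(a_1, \ldots, a_r) \mapsto \sum a_i$. This kernel is free abelian of rank $r - 1$.

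The second step is to apply additivity of rank on short exact sequences of abelian groups (which holds since tensoring with $\mathbb{Q}$ is exact). By \Cref{samuelthm}, $R^*/k^*$ is already free abelian of finite rank, and so \eqref{mainexactsequence} yields
$$\rank(R^*/k^*) + \rank \Cl^0_\partial(\overline{X}) \;=\; \rank \Div^0_\partial(\overline{X}) \;=\; r - 1.$$
Since ranks are non-negative, this immediately gives the bound $\rank R^*/k^* \leq r - 1$. Moreover, equality holds precisely when $\rank \Cl^0_\partial(\overline{X}) = 0$, which, for a finitely generated abelian group, is equivalent to $\Cl^0_\partial(\overline{X})$ being torsion. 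There is no substantive obstacle: the entire argument is a direct unwinding of the exact sequence already constructed, together with the rank computation of $\Div^0_\partial(\overline{X})$ as the kernel of an augmentation map.
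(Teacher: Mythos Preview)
Your proof is correct and follows the same approach as the paper: identify $\Div^0_\partial(\overline{X})$ as the rank-$(r-1)$ kernel of the augmentation map, then read off the rank inequality and the equality criterion from the short exact sequence \eqref{mainexactsequence}. In fact, you spell out the rank additivity and the ``if and only if'' part more explicitly than the paper's own (rather terse) proof does.
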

\begin{proof}
The subgroup $\Div_\partial(\overline{X})$ of $\Div(\overline{X})$ (consisting of Weil divisors supported on $\partial X$) is a free group of rank $r$, and the degree $0$ condition implies $\Div^0_\partial(\overline{X})$ is a free subgroup of rank $r-1$.
\end{proof}

\begin{coro} \label{curveboundcorollary}
If $C$ is a very affine curve over $k$ with coordinate ring $R$, with projective closure $\overline{C} \subseteq \mathbb{P}_k^n$ of degree $d$, then $\rank R^*/k^* \leq (n+1)d - 1$. 
\end{coro}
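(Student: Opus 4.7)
The plan is to apply \Cref{firstUpperBound} and bound the number of divisorial components of $\partial C$ using Bezout's theorem. By \Cref{firstUpperBound}, we have $\rank R^*/k^* \leq r - 1$, where $r$ denotes the number of divisorial components of $\partial C = \overline{C} \setminus C$. Since $C$ is a curve, every divisorial component of the boundary is simply a closed point of $\overline{C}$, so the task reduces to producing an upper bound on the number of such points.

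Because $C$ is very affine with projective closure $\overline{C} \subseteq \mathbb{P}^n_k$, the boundary is cut out by the vanishing of the product of homogeneous coordinates:
\[
\partial C \;=\; \overline{C} \cap V(x_0 x_1 \cdots x_n) \;=\; \bigcup_{i=0}^{n} \bigl(\overline{C} \cap V(x_i)\bigr).
\]
For each $i$, the intersection $\overline{C} \cap V(x_i)$ is the intersection of the degree $d$ curve $\overline{C}$ with a hyperplane in $\mathbb{P}^n_k$. By Bezout's theorem, this intersection consists of $d$ points counted with multiplicity, so in particular the number of distinct closed points is at most $d$.

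Summing over the $n+1$ coordinate hyperplanes, we obtain $r \leq (n+1)d$, and substituting into the bound from \Cref{firstUpperBound} yields $\rank R^*/k^* \leq (n+1)d - 1$, as desired. No step appears to present a genuine obstacle here; the only minor care needed is to observe that even when components of $\partial C$ overlap between different coordinate hyperplanes, the union bound is still valid (in fact typically not tight), so the stated inequality holds even when the bound is far from sharp.
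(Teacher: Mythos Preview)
Your proof is correct and follows essentially the same approach as the paper: reduce to \Cref{firstUpperBound}, observe that divisorial components of the boundary of a curve are just closed points, and bound the number of boundary points by intersecting $\overline{C}$ with the $n+1$ coordinate hyperplanes. The only cosmetic difference is that you phrase the hyperplane intersection count via B\'ezout, whereas the paper simply appeals to $\deg \overline{C} = d$; these amount to the same thing.
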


\begin{proof}
As $C$ is a curve, the divisorial components of $\partial C$ are just the (closed) points of $\partial C$. Since $C$ is very affine, the boundary $\partial C$ consists of the intersections of $\overline{C}$ with each of the $n+1$ coordinate hyperplanes in $\mathbb{P}^n_k$. Then $\deg \overline{C} = d$ implies $\partial C$ consists of at most $(n+1)d$ points, and the result follows from \Cref{firstUpperBound}.
\end{proof}

 Samuel's \Cref{samuelthm} tells us that the structure of the unit group -- as an \textit{abstract} group -- is as nice as possible. However, we need more information about the other groups in \eqref{mainexactsequence} to explicitly give generators for $R^*/k^*$. The following basic, but crucial, point states that all relations in $\Cl_{\partial}^0(\overline{X})$ are ``geometric", in the sense that they come from the class group of $\overline{X}$.
\begin{prop}
\label{clsubgroup}
$\Cl_{\partial}^0(\overline{X})$ is a subgroup of $\Cl^0(\overline{X})$.
\end{prop}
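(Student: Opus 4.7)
The plan is to produce an explicit injective homomorphism $\Cl_\partial^0(\overline{X}) \hookrightarrow \Cl^0(\overline{X})$, constructed from the obvious inclusion of divisor groups. Specifically, I would consider the composition
\[
\psi \colon \Div_\partial^0(\overline{X}) \hookrightarrow \Div^0(\overline{X}) \twoheadrightarrow \Cl^0(\overline{X}),
\]
where the first map is the inclusion of divisors supported on $\partial X$ (a subgroup by definition) and the second is the quotient by principal divisors. Then I would verify that $\ker \psi = \img \phi$, so that $\psi$ descends to an injection $\Cl_\partial^0(\overline{X}) = \Div_\partial^0(\overline{X})/\img \phi \hookrightarrow \Cl^0(\overline{X})$.

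For the inclusion $\img \phi \subseteq \ker \psi$: if $D = \phi([f])$ for some $f \in R^*$, then by the construction of $\phi$ (using \Cref{homogenizationLemma} and the identification $k(X) = k(\overline{X})$), $D$ equals $\divs(f)$ viewed as a principal divisor on $\overline{X}$, hence vanishes in $\Cl^0(\overline{X})$.

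The substantive direction is $\ker \psi \subseteq \img \phi$, and this is where I expect the only real content to lie. Suppose $D \in \Div_\partial^0(\overline{X})$ satisfies $\psi(D)=0$; then $D = \divs(g)$ for some $g \in k(\overline{X})^\ast$. Since $\supp D \subseteq \partial X$, the rational function $g$ has neither zeros nor poles on $X = \overline{X} \setminus \partial X$, so $g$ restricts to a regular, nonvanishing function on the affine variety $X$, i.e.\ an element of $R^\ast$. (If one prefers to be fussy about normality when $\overline{X}$ is not normal, one can either pass through the normalization as in the proof of \Cref{samuelthm}, or just use that a rational function with no poles on an affine open is regular there, so that $g|_X \in R$ and similarly $g^{-1}|_X \in R$.) Then $\phi([g|_X]) = D$, showing $D \in \img\phi$.

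Combining the two inclusions yields that $\psi$ induces a well-defined and injective homomorphism $\Cl_\partial^0(\overline{X}) \hookrightarrow \Cl^0(\overline{X})$, which is what was asserted. The only potential obstacle is the identification of rational functions on $\overline{X}$ with elements of $\Frac(R)$, together with the statement that a rational function regular on an affine variety lies in its coordinate ring; both are standard once one keeps track of the homogenization conventions set up in \Cref{homogenizationLemma}.
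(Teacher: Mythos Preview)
Your proof is correct and follows essentially the same approach as the paper: both arguments reduce to the observation that a rational function on $\overline{X}$ whose divisor is supported on $\partial X$ restricts to a unit on $X$, hence lies in the image of $\phi$. The only cosmetic difference is that the paper first passes to the quotient $\Div^0(\overline{X})/(R^*/k^*)$ and then checks $\img(\alpha)\cap\ker(\beta)=0$, whereas you compute $\ker\psi$ directly; the content is the same.
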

\begin{proof}
Consider the composition
\[
\Cl_{\partial}^0(\overline{X}) \cong 
\Div_{\partial}^0(\overline{X})/(R^*/k^*) \xhookrightarrow{\alpha} \Div^0(\overline{X})/(R^*/k^*) \xrightarrow{\beta} \Cl^0(\overline{X}).
\]
To show that the composite is an injection, it suffices to show that $\img(\alpha) \cap \ker(\beta) = \{0\}$. But this follows since $\ker(\beta) = \Frac(\overline{R})^*/(R^*/k^*)$, and $\Frac(\overline{R})^* \cap \Div_{\partial}^0(\overline{X}) = R^*/k^*$, as a rational function on $\overline{X}$ supported only on $\partial X$ is a unit on $X$.
\end{proof}

\begin{remark}
Recall that the class group of the ring of integers of a number field is finite. If a similar result held in our setting, \Cref{firstUpperBound} would give an explicit description for the rank of $R^*/k^*$. Unfortunately, of course, $\Cl(\overline X)$ need not be so well-behaved in general.
\end{remark}

In general, our approach to computing $R^*/k^*$ via \eqref{mainexactsequence} proceeds in three parts:

\begin{restatable}{question}{questionone}
\label{questionone}
What are the generators of the image of $R^*/k^*$ in $\Div_{\partial}^0 (\overline{X})$?
\end{restatable}

\begin{restatable}{question}{questiontwo}
\label{questiontwo}
Given $D \in \Div_{\partial}^0 (\overline{X})$ that is in the image of $R^*/k^*$, can we find polynomials $f, g$ such that $f/g \in R^*/k^*$ is mapped to $D$ (under the inclusion $R^* \subseteq \Frac(\overline{R})$)?
\end{restatable}

\begin{restatable}{question}{questionthree}
\label{questionthree}
Given an element of $R^*/k^*$ expressed as a rational function as in \Cref{questiontwo}, can we find a representative for it in $R$?
\end{restatable}

Note that \Cref{clsubgroup} suggests a path towards progress on \Cref{questionone}, as the image of $R^*/k^*$ in $\Div^0_\partial(\overline{X})$ equals $\ker(\Div^0_\partial(\overline{X}) \to \Cl^0_\partial(\overline{X}))$, and by \Cref{clsubgroup} this is the same as $\ker(\Div^0_\partial(\overline{X}) \to \Cl^0(\overline{X}))$. Ultimately though, one needs control over $\Cl^0(\overline{X})$ to solve Questions \ref{questionone} and \ref{questiontwo}, and this will require methods particular to the varieties under consideration.

On the other hand, \Cref{questionthree} can be solved with relatively basic Gr\"obner basis algorithms, which we use repeatedly in the remainder of the paper. We note that ordinary Gr\"obner basis arguments over polynomial rings can be adapted to Laurent polynomial rings by identifying the rings $k[x_1^\pm, \ldots, x_n^\pm] \cong k[x_1, \ldots, x_n, t]/(tx_1 \ldots x_n - 1)$.

\begin{algorithm}[Clearing denominators]
\label{clearDenom}
\begin{algorithmic}[1]
\item[]
\REQUIRE $f, g \in k[x_1^{\pm 1}, \ldots, x_n^{\pm 1}]$, $I = (\phi_1, \ldots, \phi_m) \subseteq k[x_1^{\pm 1}, \ldots, x_n^{\pm 1}]$ with a fixed monomial order
\ENSURE $h \in k[x_1^{\pm 1}, \ldots, x_n^{\pm 1}]$ with $f - gh \in I$ if such an $h$ exists, or \FALSE \, otherwise
\STATE $J \leftarrow I + (g)$
\STATE $G \leftarrow$ Gr\"obnerBasis($J$)
\IF{$f \notin$ ideal($G$) }
    \RETURN \!\!\! \FALSE
\ENDIF
\STATE $C = (C_0,\ldots, C_m) \leftarrow$ a vector with entries in $R$ such that $f = C_0 g + C_1 \phi_1 + \ldots + C_m \phi_m$
\RETURN{\!$C_0$}
\end{algorithmic}
\end{algorithm}

\begin{lemma}
For $f, g \in R=k[x_1^{\pm 1}, \ldots, x_n^{\pm 1}]/I$, \Cref{clearDenom} correctly determines whether there exists $h \in R$ such that $f = gh$, and returns such an $h$ if it exists.
\end{lemma}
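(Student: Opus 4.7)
The plan is to translate the equation $f = gh$ in $R$ into an ideal membership question in the Laurent polynomial ring $S := k[x_1^{\pm 1},\ldots,x_n^{\pm 1}]$, and then invoke the standard extended Gröbner basis algorithm (accessed through the isomorphism $S \cong k[x_1,\ldots,x_n,t]/(tx_1\cdots x_n - 1)$ noted in the paper) to both decide membership and produce an explicit witness cofactor.

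First I would unwind the definitions. An element $h \in R$ satisfies $f = gh$ in $R$ if and only if some lift $\tilde h \in S$ satisfies $f - g\tilde h \in I$, which is equivalent to $f \in (g) + I = J$. This equivalence immediately shows that the membership test in step 3 of \Cref{clearDenom} is correct: if $f \notin J = \mathrm{ideal}(G)$, then no $h$ exists and the algorithm legitimately returns \textsc{False}. Conversely, $f \in J$ guarantees that some valid $h$ exists and reduces the task to producing one.

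When $f \in J$, the extended version of the Gröbner basis division routine produces cofactors $C_0, C_1, \ldots, C_m \in S$ with
\[
f = C_0 g + C_1 \phi_1 + \cdots + C_m \phi_m,
\]
by tracking how each element of the Gröbner basis $G$ is expressed in terms of the input generators $(g,\phi_1,\ldots,\phi_m)$ and then composing these with the normal-form reduction of $f$ against $G$. Setting $h := C_0$ gives $f - gh = \sum_{i=1}^m C_i \phi_i \in I$, so $f = gh$ in $R$, which is exactly what the algorithm returns.

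The only genuine obstacle is the bookkeeping needed to extract $C_0$: one must maintain, throughout the Gröbner basis computation and the subsequent reduction of $f$, an explicit transition matrix recording each Gröbner basis element as an $S$-linear combination of $(g,\phi_1,\ldots,\phi_m)$, so that the reduction coefficients can be pulled back to coefficients on the original generators. This is a standard adaptation of Buchberger's algorithm implemented in any computer algebra system, and its correctness is routine; invoking it is all that is needed to complete the verification.
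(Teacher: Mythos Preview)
Your proposal is correct and follows essentially the same approach as the paper's proof: both reduce the existence of $h$ to the ideal membership $f \in J = I + (g)$, then invoke the standard (extended) Gr\"obner basis routine to test membership and extract the cofactor $C_0$. You simply spell out in more detail the bookkeeping behind producing $C_0$, which the paper summarizes as ``a standard Gr\"obner basis argument.''
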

\begin{proof}
A standard Gr\"obner basis argument checks whether $f\in J$ and, if so, finds such a vector $C$ as above. Note that $f \in J$ if and only if there exists $h$ such that $f - gh \in I$, so that $f = gh\in R$.
\end{proof}

\begin{algorithm}[Testing units]
\label{testUnits}
\begin{algorithmic}[1]
\item[]
\REQUIRE $h \in k[x_1^{\pm 1}, \ldots, x_n^{\pm 1}]$, $I = (\phi_1, \ldots, \phi_m) \subseteq k[x_1^{\pm 1}, \ldots, x_n^{\pm 1}]$ with a fixed monomial order
\ENSURE \TRUE \, if $h \in (k[x_1^{\pm 1},\dots,x_n^{\pm 1}]/I)^*$, or \FALSE \, otherwise
\STATE $J\leftarrow I + (h)$
\STATE $G\leftarrow$ Gr\"obnerBasis($J$)
\IF{$1 \in$ ideal($G$) }
    \RETURN \!\!\! \TRUE
\ENDIF
\RETURN \!\!\! \FALSE
\end{algorithmic}
\end{algorithm}

\begin{lemma}
For $h \in R=k[x_1^{\pm 1}, \ldots, x_n^{\pm 1}]/I$, \Cref{testUnits} correctly tests if $h$ is a unit in $R$.
\end{lemma}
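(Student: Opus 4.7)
The plan is to show that the condition checked by \Cref{testUnits} -- namely, whether $1$ lies in the ideal generated by a Gr\"obner basis $G$ of $J = I + (h)$ -- is exactly the algebraic condition for $h$ to be a unit in $R$.

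First I would unpack the definition: $h$ is a unit in $R = k[x_1^{\pm 1}, \ldots, x_n^{\pm 1}]/I$ if and only if there exists $h' \in k[x_1^{\pm 1}, \ldots, x_n^{\pm 1}]$ with $hh' - 1 \in I$, which in turn is equivalent to $1 \in I + (h) = J$ as ideals in $k[x_1^{\pm 1}, \ldots, x_n^{\pm 1}]$. So the task reduces to showing that \Cref{testUnits} correctly tests whether $1 \in J$.

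Next I would invoke the standard Gr\"obner basis criterion for ideal membership: given a Gr\"obner basis $G$ of $J$ with respect to the fixed monomial order, an element $f$ lies in $J$ if and only if its normal form with respect to $G$ is zero; equivalently, $f \in \mathrm{ideal}(G)$ in the sense checked by line 3 of the algorithm. Applied to $f = 1$, this gives $1 \in J$ iff $1 \in \mathrm{ideal}(G)$, which is precisely what the algorithm tests. To transport this from the polynomial to the Laurent polynomial setting, I would cite the identification $k[x_1^{\pm 1}, \ldots, x_n^{\pm 1}] \cong k[x_1, \ldots, x_n, t]/(tx_1 \cdots x_n - 1)$ already mentioned in the paper, under which Gr\"obner bases and ideal membership pass through unchanged.

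There is no real obstacle here; the argument is a two-line composition of (i) the characterization of units via the unit-ideal condition and (ii) the Gr\"obner basis test for membership of $1$. The only subtlety worth flagging is that one is working over a Laurent polynomial ring rather than a polynomial ring, which is handled by the identification noted above.
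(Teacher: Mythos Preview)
Your proposal is correct and follows essentially the same approach as the paper: both reduce the unit test to the equivalence $h \in R^* \iff 1 \in I + (h)$ and then invoke the standard Gr\"obner basis ideal membership test, with the Laurent-to-polynomial identification handling the only subtlety. The paper's proof is simply a terser version of what you wrote.
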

\begin{proof}
A standard Gr\"obner basis argument checks whether $1\in J$. Note that $1 \in J = I + (h) \subseteq k[x_1^{\pm 1},\dots,x_n^{\pm 1}]$ if and only if $h \in (k[x_1^{\pm 1},\dots,x_n^{\pm 1}]/I)^*$.
\end{proof}

\begin{algorithm}[Computing preimages of $R^* \rightarrow \Frac(\overline{R})^*$]
\label{computingRepOfUnit}
\begin{algorithmic}[1]
\item[]
\REQUIRE $\overline{f}, \overline{g} \in k[x_0,\dots,x_n]$ homogeneous, $\frac{\overline{f}}{\overline{g}} \in \Frac(\overline{R})^*$ and $I = (\phi_1,\dots,\phi_m)\subseteq k[x_1^{\pm 1}, \ldots, x_n^{\pm 1}]$ with a fixed monomial order 
\ENSURE $h \in k[x_1^{\pm 1},\dots,x_n^{\pm 1}]$ such that $h = \frac{\overline{f}}{\overline{g}}$ in $\Frac(\overline{R})^*$ (via the inclusion $R^* \subseteq \Frac(\overline{R})^*$) if such an $h$ exists, or \FALSE \, otherwise
\STATE{$f \leftarrow \overline{f}(1,x_1,\dots,x_n)$}
\STATE{$g \leftarrow \overline{g}(1,x_1,\dots,x_n)$}
\IF{\Cref{clearDenom}$(f,g,I) = $ \FALSE}
    \RETURN \FALSE
\ELSE
    \STATE{$h \leftarrow \Cref{clearDenom}(f,g,I)$}
    \IF{\Cref{testUnits}$(h, I) =$ \TRUE}
    	\RETURN $h$
    \ELSE
    	\RETURN \FALSE
    \ENDIF
\ENDIF
\end{algorithmic}
\end{algorithm}

\begin{lemma}
\label{questionthreelemma}
Let $X$ be a very affine variety over $k$ with coordinate ring $R = k[x_1^{\pm 1}, \ldots, x_n^{\pm 1}]/(\phi_1, \ldots, \phi_m)$. Let $\overline f$ and $\overline g$ be homogeneous polynomials in $k[x_0,\ldots, x_n]$, and $f, g \in k[x_1,\dots,x_n]$ their dehomogenizations with respect to $x_0$. Given a rational function $\frac{\overline{f}}{\overline{g}} \in \Frac(\overline{R})^*$, \Cref{computingRepOfUnit} correctly decides whether $\frac{f}{g} \in R^*$ (via the inclusion $R^* \subseteq \Frac(\overline{R})^*$), and if so, computes a representative $h \in k[x_1^{\pm 1},\dots,x_n^{\pm}]$ for $\frac{f}{g}$.
\end{lemma}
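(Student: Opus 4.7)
The plan is to verify correctness of \Cref{computingRepOfUnit} by decomposing the problem into the two sub-tasks that the auxiliary algorithms solve. The rational function $\overline{f}/\overline{g} \in \Frac(\overline{R})^*$ lies in the image of the inclusion $R^* \hookrightarrow \Frac(\overline{R})^*$ (from \Cref{homogenizationLemma}) if and only if, after dehomogenizing to $f,g \in k[x_1,\dots,x_n]$, there is some $h \in R$ with $f = gh$ in $R$ and $h \in R^*$. So I would argue: (i) the algorithm's dehomogenization step correctly produces representatives $f,g$ of the same rational function in $\Frac(\overline{R})^*$; (ii) \Cref{clearDenom}$(f,g,I)$ decides whether an $h \in R$ satisfying $f = gh$ exists and produces one if it does; and (iii) \Cref{testUnits}$(h,I)$ then decides whether this $h$ is a unit in $R$.

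More precisely, I would first invoke \Cref{homogenizationLemma}, which identifies dehomogenization with evaluation at $x_0 = 1$ and shows that it is a ring homomorphism; hence the class of $\overline{f}/\overline{g}$ in $\Frac(\overline{R})^*$ equals $f/g$ under the natural identification of $\Frac(\overline{R})$ with $\Frac(R)$. Thus asking whether $f/g$ lies in $R^*$ via the inclusion $R^* \subseteq \Frac(\overline{R})^*$ is equivalent to asking whether $f/g$ lies in $R$ and is a unit there.

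Next I would invoke the correctness lemma for \Cref{clearDenom}: this algorithm returns \FALSE precisely when no $h \in R$ satisfies $f = gh$, in which case $f/g \notin R$ and in particular $f/g \notin R^*$, so \FALSE is the correct output. Otherwise it returns some $h \in R$ with $f = gh$, which is unique (as $R$ is a domain and $g \ne 0$ in $R$, since $\overline{f}/\overline{g}$ is a unit in $\Frac(\overline{R})$). At this point \Cref{computingRepOfUnit} calls \Cref{testUnits}$(h,I)$, whose correctness lemma guarantees that the result is \TRUE if and only if $h \in R^*$. In the \TRUE case we return $h$, which by construction represents $f/g$ in $R^*$; in the \FALSE case $h \in R \setminus R^*$ and hence $f/g \notin R^*$, so returning \FALSE is correct.

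The argument is essentially routine, combining the two correctness lemmas proved immediately above. The only point requiring slight care is to justify that the sub-tasks ``$f/g$ lies in $R$'' and ``the resulting quotient is a unit'' are exactly what needs to be checked; this follows from \Cref{homogenizationLemma} together with the fact that $R$ is a domain, so the witness $h$ produced by \Cref{clearDenom} is uniquely determined as an element of $R$ and thus unambiguously tested for being a unit.
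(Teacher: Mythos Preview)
Your proposal is correct and follows essentially the same approach as the paper's proof: both reduce to the two sub-tasks handled by \Cref{clearDenom} and \Cref{testUnits}, and both argue by cases on whether $\overline{f}/\overline{g}$ lies in $R^*$. Your version is in fact slightly more careful in making explicit that the element $h$ returned by \Cref{clearDenom} is uniquely determined (since $R$ is a domain and $g \ne 0$), a point the paper's proof uses implicitly when it assumes the $h$ produced by the algorithm coincides with the hypothesized unit.
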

\begin{proof}
If $\overline{f}/\overline{g} \in R^*$ then there must exist a Laurent polynomial $h \in R^*$ such that $\frac{\overline{f}}{\overline{g}} = \overline{h}$ in $\Frac(\overline{R})^*$, where $\overline h$ is the homogenization of $h$ with respect to $x_0$. Thus $\overline{f} - \overline{g}\overline{h} = 0$ in $\Frac(\overline{R})^*$, so $f - gh \in I$. Since $h \in R^*$, \Cref{testUnits} will verify that $h$ is a unit, and \Cref{computingRepOfUnit} will return $h$. 

Now assume that $\overline{f}/\overline{g} \notin R^*$. The algorithm will return false unless \Cref{clearDenom} returns some $h \in R^*$ such that $f - gh$. Suppose this occurs. By homogenizing, we see that $\overline{f} = \overline{g}\overline{h}$ in $\overline{R}$ and $\frac{\overline{f}}{\overline{g}} = \overline{h}$ in $\Frac(\overline{R})^*$, which is a contradiction.
\end{proof}

\section{Fermat curves and plane conics} \label{planequadrics}

We now consider two simple families of curves, Fermat curves and plane conics. These serve as our first two classes of examples for our general problem of computing unit groups.

\subsection{Fermat curves}

We first approach the problem of constructing unit groups in a purely elementary, algebraic way:
\begin{lemma}
\label{fermatlemma}
Let $T \coloneqq k[x_1^{\pm 1},\dots,x_d^{\pm 1}]$ be a Laurent polynomial ring, $I \subseteq T$ an ideal, $u \in T$ a monomial, $a \in k^*$, and $f \in I$. If there exist $g, h \in T$ with $f + au = gh$, then $\overline{g}, \overline{h}$ are units in $R \coloneqq T/I$.
\end{lemma}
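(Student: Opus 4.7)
The plan is to work modulo $I$ and exploit the fact that monomials are already units in the Laurent polynomial ring $T$, so their images in the quotient $R$ remain units.

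First I would observe that since $f \in I$, passing to $R = T/I$ gives $\overline{f} = 0$, so the equation $f + au = gh$ reduces to
\[
\overline{g}\,\overline{h} \;=\; \overline{au} \;=\; a\,\overline{u}
\]
in $R$. Next, since $u$ is a monomial in the Laurent polynomial ring $T = k[x_1^{\pm 1},\dots,x_d^{\pm 1}]$, say $u = x_1^{e_1}\cdots x_d^{e_d}$ with $e_i \in \mathbb{Z}$, it is a unit in $T$ with inverse $x_1^{-e_1}\cdots x_d^{-e_d}$. The image $\overline{u}$ is therefore a unit in $R$, and multiplication by $a \in k^*$ preserves this, so $a\overline{u}$ is a unit in $R$.

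Finally, I would invoke the elementary ring-theoretic fact that if a product $xy$ of two elements equals a unit $w$ in a commutative ring, then both $x$ and $y$ are units: indeed, $x \cdot (y w^{-1}) = 1$ and similarly $y \cdot (x w^{-1}) = 1$. Applying this to $\overline{g}\,\overline{h} = a\overline{u}$ shows that both $\overline{g}$ and $\overline{h}$ are units in $R$, as desired.

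There is no serious obstacle here; the only thing to be careful about is ensuring that one works inside the Laurent polynomial ring (so that monomials are genuinely invertible) rather than inside an ordinary polynomial subring, but this is guaranteed by hypothesis.
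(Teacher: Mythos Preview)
Your proof is correct and follows essentially the same approach as the paper: both reduce modulo $I$ to obtain $\overline{g}\,\overline{h} = a\overline{u}$, observe that monomials are units in the Laurent ring so $a\overline{u} \in R^*$, and conclude that each factor is a unit. Your version simply spells out more of the elementary details.
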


\begin{proof}
Note that $u$ is a unit in $T$ (being monomial), so $a \overline{u}$ is a unit in $R$. Since $\overline{g}\overline{h} = a \overline{u} \in R^*$, we have that $\overline{g}$ and $\overline{h}$ are also units in $R$. 
\end{proof}

\begin{exam}[Fermat curves]
\label{fermatcurvesoln}
Consider the family of Fermat curves, which are plane curves in $\mathbb{P}^2 = \Proj(k[x,y,z])$ defined by equations of the form $x^d + y^d = z^d$, for $d \in \mathbb{N}$. For a fixed degree $d$, we have $\overline{C} \coloneqq V(x^d + y^d - z^d) \subseteq \mathbb{P}^2$ with homogeneous coordinate ring $\overline{R} \coloneqq \mathbb{C}[x,y,z]/(x^d + y^d - z^d)$. Dehomogenizing with respect to $z$ and intersecting with the torus in $\mathbb{A}^2$ gives a very affine Fermat curve $C$ with coordinate ring $R = \mathbb{C}[x^{\pm 1}, y^{\pm 1}]/(x^d + y^d - 1)$. 

We will use \eqref{mainexactsequence} and \Cref{fermatlemma} to show that the unit group $R^*/k^*$ has $3d-1$ independent elements. By \Cref{curveboundcorollary}, $\rank R^*/k^* \leq (n+1)d - 1 = 3d-1,$
so this bound is tight.

Consider the relation
\[
    -x^d = y^d - 1 = \prod_{i = 0}^{d-1}(y - \zeta_d^i)
\]
which holds in $R$, where $\zeta_d$ is a primitive $d$-th root of unity. From \Cref{fermatlemma}, we conclude that $(y - \zeta_d^i)$ is a unit in $R$, for all $0 \le i \le d-1$. 
Interpreting the above relation as a dependency among $x, y - \zeta_d, \ldots, y - \zeta_d^{d-1}$ in $R^*/k^*$, we can write any $y - \zeta_d^i$ multiplicatively in terms of $x$ and $y - \zeta_d^j$ for $j \ne i$. Thus we can choose -- for instance -- to treat $y - \zeta_d^{d-1}$ as redundant, and we obtain new units $y - \zeta_d^i$ for $0 \le i \le d - 2$. Note that the relation above does not give a way to express $x$ in terms of $y - \zeta_d^i$, since $x$ appears with multiplicity $d$.

In an analogous way, we may also rearrange the defining equation of $R$ to obtain
\[
    -y^d = x^d - 1 = \prod_{i = 0}^{d-1}(x - \zeta_d^i)
\]
which gives new units $x - \zeta_d^i$ for $0 \leq i \leq d-2$. Finally, the rearrangement
\[
    1 = x^d + y^d = \prod_{i = 0}^{d-1}(x - \zeta_{2d}^{2i+1}y)
\]
gives new units $x - \zeta_{2d}^{2i+1}y$ for $0 \leq i \leq d-2$.

We thus have the units $x - \zeta_d^i$, $y - \zeta_d^i$, $x - \zeta_{2d}^{2i+1}$ where $0 \le i \le d-2$. In addition to the two units $x, y$, this gives a total of $3(d-1) + 2 = 3d - 1$ units. Note that although we have accounted for obvious redundancies by removing $x - \zeta_d^{d-1}, y - \zeta_d^{d-1}$, and $x + \zeta_{2d}^{2d-1}y$, we have not yet shown that these $3d-1$ units are independent. Algebraically, this would entail showing that there are no nontrivial multiplicative relations between these $3d-1$ elements, a fairly nontrivial task. We instead adopt a geometric approach, whose utility will become evident already in this case.

First, the divisors of these units (viewed as rational functions) are supported on the boundary $\partial C$ of the Fermat curve, which consists of the following $3d$ points:
\begin{enumerate}
\item $P_i \coloneqq [\zeta_{2d}^{2i+1} \colon 1 \colon 0]$ for $0 \leq i \leq d-1$
\item $Q_i \coloneqq [\zeta_d^i \colon 0 \colon 1]$ for $0 \leq i \leq d-1$
\item $T_i \coloneqq [0 \colon \zeta_d^i \colon 1]$ for $0 \leq i \leq d-1$
\end{enumerate}
As before, let $\phi : R^*/k^* \rightarrow \Div_{\partial}^0(\overline{X})$ be the injection in \Cref{mainexactsequence}. We have
\begin{enumerate}
\item $\phi(x) = \sum T_i - \sum P_i$
\item $\phi(y) = \sum Q_i - \sum P_i$
\item $\phi(y - \zeta_d^j) = dT_j - \sum P_i$ for $0 \leq j \leq d-2$
\item $\phi(x - \zeta_d^j) = dQ_j - \sum P_i$ for $0 \leq j \leq d-2$
\item $\phi(x - \zeta_{2d}^{2j+1}y) = (d-1)P_j - \sum_{i\neq j} P_i$ for $0 \leq j \leq d-2$
\end{enumerate}

Under the identification $\Div_{\partial}(\overline{C}) = \mathbb{Z}\langle P_1,\dots, P_d,Q_1,\dots,Q_d,T_1,\dots,T_d \rangle \cong \mathbb{Z}^{3d}$, we obtain the following $3d \times (3d-1)$ matrix whose columns represent the divisors of our given units.
\begin{figure}[h!]
\begin{center}
\begin{tikzpicture}
\draw (0,0) rectangle node{$1_{d\times 1}$} (1,5);
\draw (1,0) rectangle node{$0_{d\times 1}$} (2,5);
\draw (0,5) rectangle node{$0_{d\times 1}$} (1,10);
\draw (1,5) rectangle node{$1_{d\times 1}$} (2,10);
\draw (2,0) rectangle node{$0_{1\times d-1}$} (6,1);
\draw (2,1) rectangle node{$dI_{d-1}$} (6,5);
\draw (2,5) rectangle node{$0_{d\times d-1}$} (6,10);
\draw (6,0) rectangle node{$0_{d+1\times d-1}$} (10, 6);
\draw (6,6) rectangle node{$d I_{d-1}$} (10, 10);
\draw (10,0) rectangle node{$0_{2d\times d-1}$} (15, 10);
\draw (0,10) rectangle node{$-1_{1\times 3d-1}$} (15,11);
\draw (0,11) rectangle node{$-1_{d-1\times 2d}$} (11, 15);
\draw (11,11) rectangle node{$-1_{d-1\times d-1}+dI_{d-1}$} (15, 15);
\end{tikzpicture}
\end{center}
\caption{The block matrix whose columns are divisors of the units described in Example \ref{fermatcurvesoln} for the Fermat curve $x^d + y^d = z^d$. Here $a_{m \times n}$ is an $m \times n$ matrix whose elements are all $a$, and $I_n$ is the $n \times n$ identity matrix.}
\label{fermat-divisor-matrix}
\end{figure}

A straightforward check shows that this matrix has full rank $3d-1$, and therefore our units have no relations.
It is natural at this point to ask whether these units form a basis for the unit group. It turns out that this need not be the case, as shown in \Cref{fermat2example}.
\end{exam}

\begin{remark}
We observe several things about this computation. First, we did not necessarily compute generators of $R^*/k^*$. Instead, we found enough mutually independent elements to confirm a rank statement on $R^*/k^*$. Next, this technique was only effective for the Fermat curve because of special features of its defining equation. With more variables or nearly any perturbation of the defining equation, the method of obtaining units above fails. Finally, the argument above can only prove lower bounds on the rank of the unit group. We want to compute generators of the unit group, so in general we will need more tools than \Cref{fermatlemma}. 
\end{remark}

\subsection{Plane conics}

Let $\overline{C} \subseteq \mathbb P^2_k$ be a smooth projective plane conic defined by a homogeneous quadric $f(x,y,z)$, and $C$ the corresponding very affine curve (obtained by dehomogenizing with respect to $z$ and intersecting with the 2-torus $\mathbb{T}^2 := \mathbb{A}^2 \setminus V(xy)$), with coordinate ring $R$. We describe methods for answering \Cref{questionone} and \Cref{questiontwo} in this case. Combined with \Cref{questionthreelemma}, this gives an algorithm to compute a basis of $C^*/k^*$.

\begin{algorithm}[Computing unit groups of conics]
\label{conicalg}
\item[]
\begin{algorithmic}[1]

\REQUIRE A homogeneous quadric $f(x,y,z)$ defining a plane conic $\overline C\subseteq \mathbb P^2$
\ENSURE A basis of $R^*/k^*$
\STATE $P_1, \ldots, P_n \leftarrow$ boundary points of $\overline C$
\STATE $P \leftarrow$ any other point of $\overline C$
\FORALL{$i \in \{1,\dots, n\}$}
	\STATE $L_i \leftarrow$ defining equation of line between $P_i$ and $P$
\ENDFOR
\FORALL{$i\in \{1,\dots, n-1\}$}
	\STATE Compute $f_i \in k[x^{\pm 1}, y^{\pm 1}]$ equivalent to $L_i/L_{i+1}$ in $R$ using \Cref{computingRepOfUnit}
\ENDFOR
\RETURN{$f_1,\ldots, f_{n-1}$}
\end{algorithmic}
\end{algorithm}

\begin{theorem}
Algorithm \ref{conicalg} computes a basis for $R^*/k^*$.
\end{theorem}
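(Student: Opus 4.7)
The plan is to track the images of the candidate units $f_i$ under the injection $\phi : R^*/k^* \hookrightarrow \Div^0_\partial(\overline C)$ from \eqref{mainexactsequence}, show they form a $\mathbb{Z}$-basis of the codomain, and then invoke injectivity of $\phi$ to conclude that the $f_i$ themselves form a basis of $R^*/k^*$. This will simultaneously show that $\Cl^0_\partial(\overline C) = 0$ for a smooth plane conic.

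The key calculation uses B\'ezout's theorem on $\mathbb P^2$. Since $\overline C$ is a smooth conic of degree $2$ and $L_i$ is the line joining the two \emph{distinct} points $P_i, P \in \overline C$, the scheme-theoretic intersection $L_i \cap \overline C$ consists of exactly $P_i$ and $P$, each with multiplicity one. The degree-zero rational function $L_i/L_{i+1}$ on $\mathbb P^2$ therefore restricts to a rational function on $\overline C$ with divisor
\[
(P_i + P) - (P_{i+1} + P) = P_i - P_{i+1},
\]
which is supported on $\partial C$. By \Cref{homogenizationLemma}(ii) this forces the dehomogenization of $L_i/L_{i+1}$ to lie in $R^*$, so \Cref{questionthreelemma} (applied via \Cref{computingRepOfUnit}) succeeds in returning a Laurent-polynomial representative $f_i$ with $\phi(f_i) = P_i - P_{i+1}$.

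To finish, I would note that $\{P_i - P_{i+1}\}_{i=1}^{n-1}$ is a $\mathbb Z$-basis of $\Div^0_\partial(\overline C) \cong \mathbb Z^{n-1}$, being related to the standard basis $\{P_i - P_n\}_{i=1}^{n-1}$ by a triangular unimodular integer change of basis. Together with injectivity of $\phi$ this makes $\phi$ a $\mathbb{Z}$-module isomorphism, so $\{f_1, \ldots, f_{n-1}\}$ is a basis of $R^*/k^*$. The only incidental checks needed are that a point $P \in \overline C \setminus \partial C$ exists (immediate, since $k$ is algebraically closed and $\overline C \cong \mathbb P^1_k$ is infinite) and that $L_i$ and $L_{i+1}$ are not scalar multiples of one another (clear since $P_i \neq P_{i+1}$, so the two lines already differ at $P_i$). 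I do not foresee any serious obstacle: the argument is driven entirely by the one-line B\'ezout computation, and everything else is bookkeeping through the exact sequence \eqref{mainexactsequence}.
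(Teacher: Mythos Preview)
Your proposal is correct and follows essentially the same approach as the paper: compute that $L_i/L_{i+1}$ has divisor $P_i - P_{i+1}$ on $\overline C$, observe that $\{P_i - P_{i+1}\}_{i=1}^{n-1}$ is a $\mathbb Z$-basis of $\Div^0_\partial(\overline C)$, and conclude via \eqref{mainexactsequence}. The only cosmetic difference is that the paper first invokes $\Cl^0(\overline C)=0$ (since $\overline C\cong\mathbb P^1$) to deduce that $\phi$ is an isomorphism and then identifies preimages of a basis, whereas you exhibit the preimages directly and read off surjectivity (hence $\Cl^0_\partial(\overline C)=0$) as a consequence; your version simply supplies more detail (B\'ezout, the unimodular change of basis, and the incidental well-definedness checks) where the paper is terse.
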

\begin{proof}
Observe that $\Cl^0(\overline C) = 0$ (as $\overline C \cong \mathbb P^1$). \eqref{mainexactsequence} then implies that the injection $R^*/k^*\hookrightarrow \Div_{\partial}^0(\overline C)$ is an isomorphism. Then, note that $P_1-P_2, \cdots, P_{n-1}-P_n$ forms a basis for $\Div_{\partial}^0(\overline C)$, and $L_i/L_{i+1}$ corresponds to the divisor $P_i-P_{i+1}$. Applying \Cref{computingRepOfUnit} finishes the proof.
\end{proof}

Note that the choice of basis $\{P_i - P_{i+1}\}$ in the above proof was arbitrary; any basis of $\Div_{\partial}^0(\overline C)$ would suffice. On the other hand, this basis gives the very simple rational functions $L_i/L_{i+1}$.

\begin{exam}
\label{fermat2example}
Consider the degree 2 Fermat curve $\overline{C}$ defined by $x^2 + y^2 = z^2$. We show that the units produced in \Cref{fermatcurvesoln} are not generators of $R^*/k^*$. As in \Cref{fermatcurvesoln}, we have the following boundary points:

\begin{enumerate}
\item $P_0 \coloneqq [i \colon 1 \colon 0]$
\item $P_1 \coloneqq [-i \colon 1 \colon 0]$
\item $Q_0 \coloneqq [1 \colon 0 \colon 1]$
\item $Q_1 \coloneqq [-1 \colon 0 \colon 1]$
\item $T_0 \coloneqq [0 \colon 1 \colon 1]$
\item $T_1 \coloneqq [0 \colon -1 \colon 1]$
\end{enumerate}

\Cref{fermatcurvesoln} gives the following units and divisors (with $R^*/k^* \xhookrightarrow{\phi} \Div_{\partial}^0(\overline{C})$ as in \Cref{mainexactsequence}):
\begin{enumerate}
\item $\phi(x) = T_0 + T_1 - P_0 - P_1$
\item $\phi(y) = Q_0 + Q_1 - P_0 - P_1$
\item $\phi(y - 1) = 2T_0 - P_0 - P_1$
\item $\phi(x - 1) = 2Q_0 - P_0 - P_1$
\item $\phi(x - iy) = P_0 - P_1$ 
\end{enumerate}

The subgroup of $\Div_{\partial}^0(\overline{C})$ generated by these divisors is given by the integer column span of the matrix, which is exactly \Cref{fermat-divisor-matrix} for $d = 2$:

\[
\begin{bmatrix}
    -1  & -1  & -1 & -1 & 1 \\
    -1  & -1  & -1 & -1 & -1 \\
    0   &  1  & 0  & 2  & 0 \\
    0   & 1   & 0  & 0  & 0 \\
    1   & 0   & 2  & 0  & 0 \\
    1   &  0  & 0  & 0  & 0
\end{bmatrix}
\]

As noted in Algorithm \ref{conicalg}, one basis for $\Div^0_\partial(\overline C)$ is $\{P_i - P_{i+1} \mid 1 \le i \le n-1\} = P_1-P_2, P_2-P_3, \ldots, P_{n-1}-P_n$. From this basis we obtain the matrix
\[
\begin{bmatrix}
    1  & 0 & 0 & 0 & 0 \\
    -1 & 1 & 0 & 0 & 0 \\
    0  & -1 & 1 & 0 & 0 \\
    0  & 0  & -1 & 1 & 0 \\
    0  & 0 & 0 & -1 & 1 \\
    0 & 0 & 0 & 0 & -1
\end{bmatrix}
\]
The first lattice has index 4 in the second. It follows that the units given in \Cref{fermatcurvesoln} are not generators in this case.
\end{exam}

\begin{exam}
\label{puiseux}
Let $\overline C$ be the conic defined by $f = (1+t)x^2 + (1+t)y^2 + (1+t)z^2 - (2+2t+t^2)xy - (2+2t+t^2)yz - (2+2t+t^2)xz$, where $k$ is the field of Puiseux series in $t$ over $\mathbb{C}$. Consider the very affine curve $C$ given by intersecting with the canonical torus. Its boundary points are
\begin{enumerate}
  \item $P_1 \coloneqq [0:1:t+1]$
  \item $P_2 \coloneqq [0:t+1:1]$
  \item $P_3 \coloneqq [1:0:t+1]$
  \item $P_4 \coloneqq [t+1:0:1]$
  \item $P_5 \coloneqq [1:t+1:0]$
  \item $P_6 \coloneqq [t+1:1:0]$
\end{enumerate}

As described above, we can take a basis of $\Div_{\partial}^0(\overline{C})$ to be differences of these boundary points, e.g. $P_3 - P_1$, $P_3 - P_2$, $P_5-P_3$, $P_5-P_4$, and $P_6-P_1$. \Cref{conicalg} gives the following particularly nice generators of the unit group:

\begin{enumerate}
\item $P_3 - P_1$ gives $f_1 \coloneqq$ (line between $P_3$ and $P_2$)/(line between $P_1$ and $P_2$) $= \frac{(t+1)^2x + y - (t+1)}{x} = (t+1)^2 + yx^{-1} - (t+1)x^{-1}$
\item $P_3 - P_2$ gives $f_2 \coloneqq$ (line between $P_1$ and $P_3$)/(line between $P_1$ and $P_2$) $= \frac{(t+1)x + (t+1)y - 1}{x} = (t+1) + (t+1)yx^{-1} - x^{-1}$
\item $P_5-P_3$ gives $f_3 \coloneqq$ (line between $P_5$ and $P_4$)/(line between $P_3$ and $P_4$) $= \frac{(t+1)x - y - (t+1)^2}{y} = (t+1)xy^{-1} - 1 - (t+1)^2y^{-1}$
\item $P_5 - P_4$ gives $f_4 \coloneqq$ (line between $P_5$ and $P_3$)/(line between $P_3$ and $P_4$) $= \frac{(t+1)x - y - 1}{y} = (t+1)xy^{-1} - 1 - y^{-1}$
\item $P_6 - P_1$ gives $f_5 \coloneqq$ (line between $P_6$ and $P_2$)/(line between $P_1$ and $P_2$) $= \frac{x - (t+1)y + (t+1)^2}{x} = 1 - (t+1)yx^{-1} + (t+1)^2x^{-1}$
\end{enumerate}

So the intrinsic torus has dimension $5$, and these generators specify a map into the intrinsic torus, corresponding to the ring map $\varphi \colon k[x_1^{\pm 1},\dots,x_5^{\pm 1}] \rightarrow k[x^{\pm 1}, y^{\pm 1}]/(f)$ sending $x_i \mapsto f_i$. 

We note that the tropicalization of $f$ is simply the tropical line $0\oplus x \oplus y$ shown in Figure \ref{bad-tropicalization-of-conic}:
\begin{figure}[h!]
\begin{center}
\begin{tikzpicture}

\draw (2,2) -- (0.5,0.5);
\draw (4,2) -- (2,2);
\draw (2,4) -- (2,2);
\end{tikzpicture}
\end{center}
\caption{The tropicalization of the conic in \Cref{puiseux}.}
\label{bad-tropicalization-of-conic}
\end{figure}
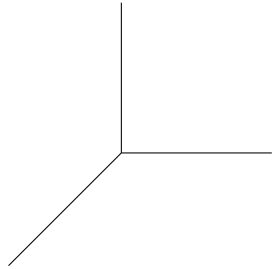

We used Singular \cite{DGPS} to compute the tropicalization of $f$ in its intrinsic torus with basis equal to $\{x, y, f_1, f_2, f_3\}$. 
The intrinsic tropicalization has the following snowflake structure typical of a generic tropical conic as in Figure \ref{good-tropicalization-of-conic}:


\begin{figure}[h!]
\begin{center}
\begin{tikzpicture}

\draw (3,4) -- (3,5);
\draw (3,4) -- (4,4);
\draw (2,3) -- (3,4);
\draw (1,3) -- (2,3);
\draw (2,2) -- (2,3);
\draw (0,2) -- (1,3);
\draw (1,1) -- (2,2);
\draw (1,3) -- (1,5);
\draw (2,2) -- (4,2);
\end{tikzpicture}
\end{center}
\caption{The intrinsic tropicalization of the conic in \Cref{puiseux}.}
\label{good-tropicalization-of-conic}
\end{figure}
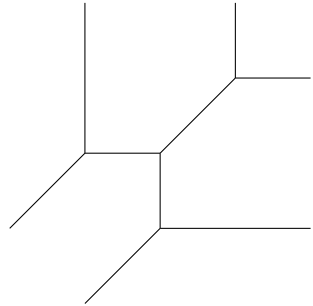
\end{exam}

\begin{remark}
Consider the complete graph whose nodes are the elements of $\partial C$. Choose a spanning tree of this graph, and pick an edge for each direction. Each edge of this tree gives a divisor; namely an edge from $P$ to $Q$ gives the divisor $P - Q$. This gives a basis of $\Div_{\partial}(\overline{C})$.

For instance, in \Cref{fermat2example}, the basis
\[
P_0 - Q_0, P_1 - Q_0, Q_1 - P_0, T_0 - P_0, T_1 - P_0
\]
corresponds to the directed tree in Figure \ref{tree1-for-conic-example}:

\begin{figure}[h!]
\begin{center}
\begin{tikzpicture}[node distance = {1.0cm and 1.5cm}, v/.style = {draw, circle},> = stealth, 
            shorten > = 1pt, 
            auto,
            semithick 
        ]

  \node (a) [v] {$P_0$};
  \node (c) [v, right = of a] {$Q_0$};
  \node (e) [v, right = of c] {$P_1$};
  
  \node (b) [v, below = of a] {$T_0$};
  \node (d) [v, right = of b] {$Q_1$};
  \node (f) [v, right = of d] {$T_1$};
  
  \path[->] (a) edge node {} (c);
  \path[->] (e) edge node {} (c);
  \path[->] (d) edge node {} (a);
  \path[->] (b) edge node {} (a);
  \path[->] (f) edge node {} (a);

\end{tikzpicture}
\end{center}
\caption{A directed tree describing a basis for the intrinsic torus of Example \ref{fermat2example}.}
\label{tree1-for-conic-example}
\end{figure}
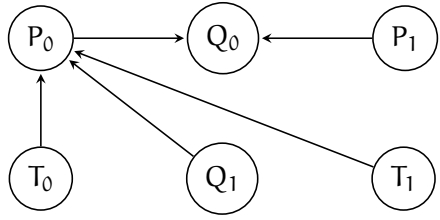

Similarly, the basis
\[
Q_0 - P_0, Q_0 - P_1, Q_0 - Q_1, Q_0 - T_0, Q_0 - T_1
\]

corresponds to the tree in Figure \ref{tree2-for-conic-example} (rooted at $Q_0$):

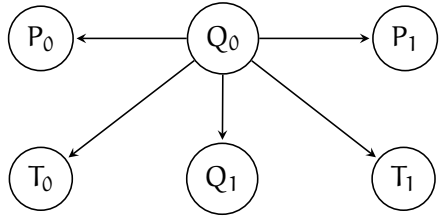
\begin{figure}[h!]
\begin{center}
\begin{tikzpicture}[node distance = {1.0cm and 1.5cm}, v/.style = {draw, circle},> = stealth, 
            shorten > = 1pt, 
            auto,
            semithick 
        ]

  \node (a) [v] {$P_0$};
  \node (c) [v, right = of a] {$Q_0$};
  \node (e) [v, right = of c] {$P_1$};
  
  \node (b) [v, below = of a] {$T_0$};
  \node (d) [v, right = of b] {$Q_1$};
  \node (f) [v, right = of d] {$T_1$};
  
  \path[->] (c) edge node {} (a);
  \path[->] (c) edge node {} (b);
  \path[->] (c) edge node {} (d);
  \path[->] (c) edge node {} (e);
  \path[->] (c) edge node {} (f);

\end{tikzpicture}
\end{center}
\caption{Another directed tree describing a basis for the intrinsic torus of Example \ref{fermat2example}.}
\label{tree2-for-conic-example}
\end{figure}

\end{remark}

\section{Rational Normal Curves}
\label{rationalnormalcurves}
We next turn our attention to rational normal curves in parametric form. Recall that for any $n$, a rational normal curve $\overline{C}$ of degree $n$ is the image of $\mathbb{P}^1$ under an embedding $\nu \colon \mathbb{P}^1 \xhookrightarrow{} \mathbb{P}^n$ given by $\nu([S \colon T]) = [f_0(S,T):\dots:f_n(S,T)]$, where $f_0, \dots,f_n$ are $k$-linearly independent homogeneous polynomials of degree $n$. 
Let $C := \overline{C} \cap \mathbb{T}^n$ be the corresponding very affine curve, with coordinate ring $R$. Our goal in this section is to give an algorithm for computing a basis of $R^*/k^*$.

\begin{remark}
Plane conics are precisely the rational normal curves of degree 2, so the following discussion generalizes part of Section \ref{planequadrics} in some sense. Note though that the presentation of the curves in question has changed: here we do not begin with the implicit equations of the rational normal curve in $\mathbb{P}^n$.
\end{remark}

The following is a modification of the polynomial subalgebra membership algorithm given in \protect{\cite[7.3.7]{coxsheaolittle}}.
\begin{algorithm}[Subalgebra membership]
\label{subalgmembership}
\item[]
\begin{algorithmic}[1]
\REQUIRE{$f_0,\dots,f_n$ degree $n$ homogeneous polynomials in $k[S,T]$ defining a rational normal curve, and a rational function $\frac{f}{g} \in k(S,T)$}
\ENSURE{$\gamma \in k[x_1^{\pm 1},\dots,x_n^{\pm 1}]$ such that its homogenization $\overline \gamma\in k[x_0^{\pm 1},\dots, x_n^{\pm 1}]$ satisfies $\frac{f(S,T)}{g(S,T)} = \overline{\gamma} \big (f_0(S,T),\dots,f_n(S,T)\big )$ if such a $\gamma$ exists, or \FALSE \, otherwise}
\STATE{$I \gets ideal(y_0 - f_0,\dots,y_n - f_n, z_0 - s_0,\dots,z_n - s_n, f_0 s_0 - 1,\dots, f_n s_n - 1, gs - 1)$ in the polynomial ring $k[y_0,\dots,y_n,z_0,\dots,z_n,s_0,\dots,s_n,s,S,T]$}
\STATE{$G \gets$ Gr\"obnerBasis(I) in a monomial ordering where any monomial involving one of the $S,T,s,s_0,\dots,s_n$ is greater than any monomial in $k[y_0,\dots,y_n,z_0\dots,z_n]$}
\STATE{$h \gets$ the remainder of dividing $fs$ by $G$}
\IF{$h \in k[y_0,\dots,y_n,z_0,\dots,z_n]$}
	\RETURN{$h(1, x_1,\dots,x_{n}, 1,x_1^{-1},\dots,x_{n-1}^{-1},x_n^{-1})$}
\ELSE
	\RETURN \FALSE
\ENDIF
\end{algorithmic}
\end{algorithm}

\begin{lemma} 
Let $\overline{C}$ be a rational normal curve with parametrization $\psi \colon \mathbb{P}^1 \xhookrightarrow{} \mathbb{P}^n$ given by $f_0,\dots,f_n$. \Cref{subalgmembership} correctly returns the pushforward $ \gamma$ of a rational function $\frac{f(S,T)}{g(S,T)}$ on $\psi^{-1}(C)$ along the map given by $\psi^{-1}(C) \xhookrightarrow{} C$, if such a $\gamma$ exists and is regular.
\end{lemma}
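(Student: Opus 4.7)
The plan is to unwind what the ideal $I$ and the elimination Gr\"obner basis compute, verify that the algorithm's \textbf{IF} branch succeeds whenever $\gamma$ exists, and then check that the returned Laurent polynomial genuinely represents the pushforward as an element of $R = k[x_1^{\pm 1},\ldots,x_n^{\pm 1}]/J$, where $J$ is the defining ideal of $C$ in the torus. I introduce three auxiliary maps: $\Phi \colon k[y_0,\ldots,y_n,z_0,\ldots,z_n] \to k(S,T)$ by $y_i \mapsto f_i$, $z_i \mapsto 1/f_i$; $\Phi' \colon k[y,z] \to k[x_1^{\pm 1},\ldots,x_n^{\pm 1}]$ by $y_0, z_0 \mapsto 1$, $y_i \mapsto x_i$, $z_i \mapsto x_i^{-1}$ for $i \geq 1$; and $\Psi \colon k[x_1^{\pm 1},\ldots,x_n^{\pm 1}] \to k(S,T)$ by $x_i \mapsto f_i/f_0$. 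Modulo $I$ we have $y_i = f_i$, $z_i = s_i = 1/f_i$, and $s = 1/g$, so $fs \equiv f/g$ in the quotient and $\Phi$ factors through it. The elimination order is chosen so that every Gr\"obner basis element whose leading monomial lies in $k[y,z]$ in fact lies entirely in $k[y,z]$; consequently the reduction of any polynomial already in $k[y,z]$ stays in $k[y,z]$.

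For the completeness step, assume $\gamma = \sum_a c_a x^a$ satisfies $\overline{\gamma}(f_0,\ldots,f_n) = f/g$. Lift each Laurent monomial $x^a$ to $k[y,z]$ by using $x_i = y_i z_0$ when $a_i \geq 0$ and $x_i^{-1} = y_0 z_i$ when $a_i < 0$, and sum these lifts to produce $h_0 \in k[y,z]$ with $\Phi(h_0) = f/g$. Then $fs - h_0 \in I$, so $fs$ and $h_0$ share a normal form $h$ under the Gr\"obner basis; by the elimination observation, $h \in k[y,z]$, the \textbf{IF} test passes, and the algorithm extracts $\Phi'(h)$.

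Finally I must verify that $\Phi'(h)$ really represents the true pushforward $\gamma$ as an element of $R$. Grade $k[y,z]$ by $\deg_y - \deg_z$ and check on monomials that $\Phi(m) = f_0^d \cdot \Psi(\Phi'(m))$ whenever $m$ has balance-degree $d$. Decomposing $h = \sum_d h^d$ into balance-homogeneous pieces, the equality $\Phi(h) = f/g$ -- a rational function of $(S,T)$-degree $0$, since $f$ and $g$ must be homogeneous of the same degree for $f/g$ to be a function on $\psi^{-1}(C)$ -- combined with the fact that each $\Phi(h^d)$ is $(S,T)$-homogeneous of degree $nd$ forces $\Phi(h^d) = 0$ for $d \neq 0$ and $\Phi(h^0) = f/g$. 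The monomial identity then gives $\Psi(\Phi'(h^d)) = 0$ for $d \neq 0$, i.e., $\Phi'(h^d)$ lies in the defining ideal $J = \ker \Psi$ of $C$, so $\Phi'(h) \equiv \Phi'(h^0) \pmod J$; and $\Psi(\Phi'(h^0)) = \Phi(h^0) = f/g$ by the balanced case. The main technical obstacle is this balance-degree bookkeeping that shows the unbalanced parts of $h$ vanish in $R$; once it is in place, the returned Laurent polynomial is the desired pushforward.
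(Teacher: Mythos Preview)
Your argument is correct. The paper's own proof is much terser: it simply invokes \cite[7.3.7]{coxsheaolittle} to obtain $fs = h(f_0,\dots,f_n,s_0,\dots,s_n)$ in the quotient, then asserts $\overline{\gamma} = h(x_0,\dots,x_n,x_0^{-1},\dots,x_n^{-1})$ and dehomogenizes. That second equality is really meant modulo the ideal of the affine cone $\hat{C}$ over $\overline{C}$ (both sides have the same image under $x_i \mapsto f_i$, and the kernel of that map on $k[x_0^{\pm},\dots,x_n^{\pm}]$ is the localized ideal of $\hat{C}$); setting $x_0 = 1$ then lands in the ideal of $C$. The paper does not spell this out. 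Your route avoids the cone entirely: you work directly with $\Psi \colon k[x_1^{\pm},\dots,x_n^{\pm}] \to k(S,T)$ and use the balance-degree decomposition together with the $\mathbb{G}_m$-weight of $\Phi(h^d)$ in $k(S,T)$ to kill the unbalanced pieces modulo $J = \ker\Psi$. This is more labor than the cone argument but is self-contained and makes explicit the one step the paper glosses over, namely why the returned Laurent polynomial actually agrees with the given $\gamma$ as an element of $R$ rather than merely having the same value under $x_i \mapsto f_i$.
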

\begin{proof}
There exists $\overline{\gamma} \in k[x_0^{\pm 1},\dots,x_n^{\pm 1}]$ such that 
\[
\frac{f}{g} = \overline{\gamma} \big (f_0,\dots,f_n \big )
\]
if and only if there exists $\chi \in k[y_0,\dots,y_n,z_0,\dots,z_n]$ such that 
\[
\frac{f}{g} = \chi \big (f_0,\dots,f_n, f_0^{-1},\dots,f_n^{-1}\big ).
\]
Setting the $s_i$ to be the inverse of the $f_i$ and setting $s$ to be the inverse of $g$, this is equivalent to the statement that $fs$ is in the $k$-algebra generated by $\{f_0,\dots,f_n,s_0,\dots,s_n\}$ in the quotient ring
\[
k[y_0,\dots,y_n,z_0,\dots,z_n,s_0,\dots,s_n,s,S,T]/(f_0s_0 - 1,\dots,f_ns_n - 1, gs-1).
\]
By \protect{\cite[7.3.7]{coxsheaolittle}}, the previous statement is true if and only if $h$, the remainder upon dividing $fs$ by the Gr\"obner basis $G$, is in the polynomial ring $k[y_1,\dots,y_n,z_1,\dots,z_n]$. Suppose $\gamma$ exists, and let $\overline{\gamma}$ be its homogenization. By \protect{\cite[7.3.7]{coxsheaolittle}}, $fs = h(f_0,\dots,f_n,s_0,\dots,s_n)$ and $\overline{\gamma} = h(x_0,\dots,x_n,x_0^{-1},\dots,x_n^{-1})$. Dehomogenizing, we get $\gamma = h(1,x_1,\dots,x_{n}, 1, x_1^{-1},\dots,x_{n}^{-1})$ as the pushforward of $f/g$. Because $h$ is a Laurent polynomial, $\gamma$ is regular on $C$.
\end{proof}

\begin{algorithm}[Computing unit groups of rational normal curves]
\item[]
\label{rationalcurvealg}
\begin{algorithmic}[1]
\REQUIRE{A rational normal curve $\overline{C}$ given parametrically by $f_0(T,S),\dots,f_n(T,S) \in k[S,T]$ and a corresponding very affine curve given by setting $f_0 = 1$}
\ENSURE{A basis of $R^*/k^*$}
\STATE{$D \gets \emptyset$}
\STATE{$[a_1 \colon b_1],\dots,[a_m \colon b_m]\leftarrow$ preimages of $\partial C$ under the parametrization map $\mathbb{P}^1 \xhookrightarrow{} \mathbb{P}^n$.}
\STATE{Choose any basis of $\Div_{\partial}^0(\overline{C})$}
\FORALL{basis elements $\sum_{i}c_i[a_{k_i} \colon b_{k_i}] - \sum_j d_j [a_{l_j} \colon b_{l_j}]$}
	\STATE{$f \leftarrow \prod_i(b_{k_i}S - a_{k_i}T)^{c_i}$}
    \STATE{$g \leftarrow \prod_j(b_{l_j}S - a_{l_j}T)^{d_j}$}
	\STATE{$\overline{\gamma} \leftarrow \Cref{subalgmembership}(f,g,f_0,\dots,f_n)$}
    \STATE{$\gamma \leftarrow \overline{\gamma}(1,x_1,\dots,x_n)$}
    \STATE{$D\leftarrow D\cup \{\gamma\}$}
\ENDFOR
\RETURN{$D$}
\end{algorithmic}
\end{algorithm}

\rationalnormalcurvesoln*
\begin{proof}
Let $C$ be parametrized by polynomials $f_0(S,T),\dots,f_n(S,T) \in k[S,T]$. As $\overline{C} \cong \mathbb{P}^1$, $\Cl_{\partial}^0(\overline{C}) = 0$, so the injection $R^*/k^* \hookrightarrow \Div_{\partial}^0(\overline{C})$ is an isomorphism. For each basis element $\sum_{i}c_i[a_{k_i} \colon b_{k_i}] - \sum_j d_j [a_{l_j} \colon b_{l_j}]$, \Cref{subalgmembership} will produce a rational function $\overline{\gamma}$ on the projective curve which has zeros of order $c_i$ at the points $[f_0(a_{k_i}, b_{k_i}):\dots,f_n(a_{k_i}: b_{k_i})]$ and poles of order $d_j$ at the points $[f_0(a_{l_j}: b_{l_j}),\dots:f_n(a_{l_j}, b_{l_j})]$. By dehomogenizing to arrive at $\gamma$, we get exactly the element of $R^*$ corresponding to our divisor.
\end{proof}

\begin{exam}
Consider the degree 3 rational normal curve $\overline{C} \subseteq \mathbb{P}^3$ given by the parametrization
\[
[S^3 - 4ST^2: S^2T - 9T^3: (S-3T)T^2: (S+3T)T^2]
\]
We compute the following boundary points:
\begin{enumerate}
\item $P_1 = [0 \colon 1]$
\item $P_2 = [1 \colon 0]$
\item $P_3 = [3:1]$
\item $P_4 = [-3:1]$
\item $P_5 = [2:1]$
\item $P_6 = [-2:1]$
\end{enumerate}

We choose the following basis of $\Div_{\partial}^0(\overline{C})$:
\[
P_1 - 2P_2 -P_4+P_5+P_6, P_2 - P_3, P_3 - P_4, P_4 - P_5, P_5 - P_6
\]

Choose coordinates $x, y, z, w$ on $\mathbb{P}^3$. We run \Cref{subalgmembership} to obtain preimages under $\overline{\phi}$ of our basis of $\Div_{\partial}^0(\overline{C})$ in $\Frac(\overline{R})^*$. Their corresponding dehomogenizations with respect to $w$ give a basis of $R^*/k^*$:

\begin{enumerate}
\item $x \rightsquigarrow x$
\item $y \rightsquigarrow y$
\item $z \rightsquigarrow z$
\item $\dfrac{x + 5y + \frac{45}{6}(w - z) + 10(w + z)}{x} \rightsquigarrow \dfrac{x + 5y + \frac{45}{6}(1 - z) + 10(1 + z)}{x}$
\item $\dfrac{x - 4y + 6(w - z) + 4(w+z)}{x} \rightsquigarrow \dfrac{x - 4y + 6(1 - z) + 4(1+z)}{x}$
\end{enumerate}
\end{exam}

\begin{remark}
Although we do not do so here, one could consider various generalizations of the results presented thus far. For example, one can essentially perform the same procedure with ``pinched'' rational curves, i.e. smooth rational curves of degree $> n$ in $\mathbb{P}^n$. However, once higher-dimensional varieties or curves with singularities are considered, the situation becomes more complicated; even computing the boundary is no longer a simple task.
\end{remark}

\section{Elliptic Curves}
\label{ellipticcurves}
Fix $k = \overline{\mathbb Q}$, let $\overline{E} \subseteq \mathbb{P}_k^2$ be an elliptic curve with a given base point $O$, and set $E \coloneqq \overline{E} \cap \mathbb{T}^2$. Due to \Cref{mainexactsequence}, computing the image of $R^*/k^*$ in $\Div_{\partial}^0(\overline{E})$ is equivalent to computing the relations between the closed points of $\partial E =: \{P_1,\ldots, P_n\}$ in $\Cl_{\partial}^0(\overline{E})$. As the group law on the elliptic curve coincides with the group law in the class group, it suffices to compute relations between the corresponding points on the elliptic curve, which can be done via canonical N\'eron--Tate heights.

\subsection{The Canonical N\'eron--Tate Height Pairing}

We briefly define canonical N\'eron--Tate heights, following the exposition from \protect{\cite{aec}}. Speaking broadly, height functions measure the ``arithmetic complexity'' of points on abelian varieties. For any field $F$ and variety $X$, let $X(F)$ denote the $F$-rational points of $X$.

\begin{theorem}[N\'eron--Tate]
\label{nerontatetheorem}
Let $\overline{E}$ be an elliptic curve defined over a number field. There exists a function $\hat{h} \colon \overline{E}(\overline{\mathbb{Q}}) \rightarrow \mathbb{R}$ called the \emph{canonical N\'eron--Tate height} satisfying the following properties:
\begin{enumerate}
    \item For all $P, Q \in \overline{E}(\overline{\mathbb{Q}})$, the parallelogram law holds, i.e.
    \[
        \hat{h}(P + Q) + \hat{h}(P - Q) = 2\hat{h}(P) + 2\hat{h}(Q).
    \]
    \item For all $P \in E(\overline{\mathbb{Q}})$ and $m \in \mathbb{Z}$,
    \[
        \hat{h}(mP) = m^2\hat{h}(P).
    \]
    \item $\hat{h}$ is an even function, and the pairing
    \[
        \langle \quad, \quad \rangle \colon E(\overline{\mathbb{Q}}) \times E(\overline{\mathbb{Q}}) \rightarrow \mathbb{R}
    \]
    \[
        \langle P, Q \rangle = \hat{h}(P+Q) - \hat{h}(P) - \hat{h}(Q)
    \]
    is bilinear. This is equivalent to saying that $\hat{h}$ is a quadratic form on $E(\overline{\mathbb{Q}})$. We call this the canonical N\'eron--Tate height pairing.
    \item For all $P \in \overline{E}(\overline{\mathbb{Q}})$, one has $\hat{h}(P) \geq 0$, and $\hat{h}(P) = 0$ if and only if $P$ is torsion.
\end{enumerate}
\end{theorem}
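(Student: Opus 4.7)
The plan is to construct $\hat{h}$ via Tate's limiting procedure starting from a naive Weil height. First I would fix a naive height function $h \colon \overline{E}(\overline{\mathbb{Q}}) \to \mathbb{R}$ associated to a symmetric divisor such as $2(O)$, for example by composing the standard height on $\mathbb{P}^1(\overline{\mathbb{Q}})$ with the degree-$2$ map $\overline{E} \to \mathbb{P}^1$ given by the $x$-coordinate in a Weierstrass model. The key classical input is that such a naive height satisfies the parallelogram law up to a bounded error, i.e.\ there is a constant $C$ (depending only on $\overline{E}$) with
\[
\bigl| h(P+Q) + h(P-Q) - 2h(P) - 2h(Q) \bigr| \leq C
\quad\text{for all } P, Q \in \overline{E}(\overline{\mathbb{Q}}).
\]
Specializing to $Q = P$ yields $|h(2P) - 4h(P)| \leq C$. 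I would then define $\hat{h}(P) := \lim_{n \to \infty} 4^{-n} h(2^n P)$ and verify the limit exists via a telescoping estimate: $|4^{-(n+1)} h(2^{n+1} P) - 4^{-n} h(2^{n} P)| \leq C \cdot 4^{-(n+1)}$, so the partial sums form a Cauchy sequence. The same estimate shows $\hat{h} - h$ is uniformly bounded, a fact I will use repeatedly.

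Now property (1), the exact parallelogram law, follows by applying the approximate parallelogram law to $2^n P$ and $2^n Q$, dividing by $4^n$, and letting $n \to \infty$: the error term becomes $C \cdot 4^{-n} \to 0$. Property (2), the identity $\hat{h}(mP) = m^2 \hat{h}(P)$, is an elementary induction from the parallelogram law applied with $(m+1)P$ and $(m-1)P$, giving the recursion
\[
\hat{h}((m+1)P) = 2\hat{h}(mP) + 2\hat{h}(P) - \hat{h}((m-1)P),
\]
with base cases $\hat{h}(O) = 0$ (set $P = Q = O$ in (1)) and $\hat{h}(P)$; the unique solution is $m^2 \hat{h}(P)$. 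Evenness $\hat{h}(-P) = \hat{h}(P)$ is built in by the choice of a symmetric divisor: one has $h(-P) = h(P) + O(1)$, which vanishes in the Tate limit. Bilinearity of $\langle P, Q \rangle = \hat{h}(P+Q) - \hat{h}(P) - \hat{h}(Q)$ is then a purely formal consequence of the parallelogram law together with evenness; the usual argument expands $\hat{h}(P_1 + P_2 + Q)$ in two different ways via (1) to obtain additivity in the first argument, and $\mathbb{Z}$-linearity follows by combining additivity with (2).

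The main obstacle is property (4), the positivity statement and the characterization of the kernel as $\overline{E}(\overline{\mathbb{Q}})_{\mathrm{tors}}$. One direction is trivial: if $P$ is torsion of order $m$, then by (2) we have $m^2 \hat{h}(P) = \hat{h}(mP) = \hat{h}(O) = 0$, forcing $\hat{h}(P) = 0$, and in particular $\hat{h}$ takes value $0$ on a dense (in the $p$-adic sense) subgroup, which combined with the quadratic form structure gives $\hat{h} \geq 0$ on all points where one can verify it for integer multiples. The serious input is the converse, which requires Northcott's finiteness theorem: the set of points in $\mathbb{P}^N(\overline{\mathbb{Q}})$ of bounded naive height and bounded degree over $\mathbb{Q}$ is finite. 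If $\hat{h}(P) = 0$, then $\hat{h}(nP) = 0$ for all $n$ by (2), so by the boundedness of $\hat{h} - h$ the naive heights $h(nP)$ are uniformly bounded; since all $nP$ lie in the finite-degree extension $\mathbb{Q}(P)$, Northcott's theorem forces $\{nP : n \in \mathbb{Z}\}$ to be finite, hence $P$ is torsion. This Northcott step is the only place where genuinely arithmetic (rather than formal) input enters; everything else is bookkeeping built on the approximate parallelogram law and the Tate averaging construction.
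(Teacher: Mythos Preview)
The paper does not give its own proof of this theorem: it is quoted as background from \cite{aec} (Silverman), with no argument supplied. So there is nothing in the paper to compare your proposal against, strictly speaking. That said, your outline is the standard Tate-averaging proof that one finds in the cited reference, and the overall architecture is correct: build $\hat h$ as a Tate limit of a naive height attached to a symmetric divisor, pass the approximate parallelogram law to the limit to get (1), deduce (2) and (3) formally, and invoke Northcott for the nontrivial direction of (4).

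One step is muddled. Your sentence for nonnegativity, ``$\hat h$ takes value $0$ on a dense (in the $p$-adic sense) subgroup, which combined with the quadratic form structure gives $\hat h \geq 0$ on all points where one can verify it for integer multiples,'' is not an argument: density of torsion in some $p$-adic topology and the quadratic-form identity do not by themselves force a real-valued function to be nonnegative. The clean way to get $\hat h \geq 0$ is to use what you already established: $h \geq 0$ (or $h$ bounded below) together with $|\hat h - h| \leq C'$ gives $\hat h(P) \geq -C'$ for all $P$; then $m^2 \hat h(P) = \hat h(mP) \geq -C'$ forces $\hat h(P) \geq -C'/m^2$ for every $m$, hence $\hat h(P) \geq 0$. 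With that fix your proof is complete and matches the reference the paper cites.
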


For any number field $K$, we can obtain a bilinear form on $\overline{E}({K})$ by restricting the bilinear form on $\overline{E}(\overline{\mathbb{Q}})$ in \Cref{nerontatetheorem}(3). This can be extended to a bilinear form on the finite-dimensional real vector space $\overline{E}(K) \otimes \mathbb{R}$.

\begin{prop}[\protect{\cite[VIII.9.9.6]{aec}}]
The N\'eron--Tate height induces a positive definite inner product on $\overline{E}(K) \otimes \mathbb{R}$.
\end{prop}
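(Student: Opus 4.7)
The plan is to combine three ingredients: the quadratic form structure of $\hat{h}$ from \Cref{nerontatetheorem}, the Mordell--Weil theorem, and Northcott's finiteness property for points of bounded height in a number field. I will work throughout with the finite-dimensional real vector space $V := \overline{E}(K) \otimes \mathbb{R}$, where the finite dimensionality is guaranteed by Mordell--Weil.

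First I would observe that since torsion elements of $\overline{E}(K)$ are killed upon tensoring with $\mathbb{R}$, we may replace $\overline{E}(K)$ by the free abelian group $L := \overline{E}(K)/\overline{E}(K)_{\mathrm{tors}}$ of some finite rank $r$, so that $V \cong \mathbb{R}^r$ contains $L$ as a full-rank lattice. By \Cref{nerontatetheorem}(3), $\hat{h}$ is a quadratic form on $L$, so it extends uniquely by polynomial continuity to a quadratic form $Q \colon V \to \mathbb{R}$ with associated symmetric bilinear form $\langle \cdot , \cdot \rangle$. By \Cref{nerontatetheorem}(4), $Q$ is non-negative on the dense subset $L \otimes \mathbb{Q} \subset V$, hence non-negative (i.e.\ positive semi-definite) on all of $V$ by continuity. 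Also by \Cref{nerontatetheorem}(4), $Q$ vanishes on no nonzero element of $L$.

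The remaining task is to promote positive semi-definiteness to positive definiteness: I must show that $W := \{v \in V : Q(v) = 0\}$ is trivial. Suppose for contradiction that $W \neq 0$, and consider the projection $\pi \colon V \twoheadrightarrow V/W$, where the induced form on $V/W$ is positive definite. The restriction $\pi|_L$ has kernel $L \cap W$, consisting of lattice points with $Q$-value zero; since $L$ is torsion-free, \Cref{nerontatetheorem}(4) forces $L \cap W = 0$, so $\pi|_L$ is injective. This puts me in a dichotomy: either $\pi(L)$ is discrete in $V/W$, or it is not.

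The main obstacle—and the only step that uses arithmetic rather than linear algebra—is ruling out the non-discrete case. If $\pi(L)$ were discrete, it would be a finitely generated abelian subgroup of $V/W$ of rank $\leq \dim V/W < r$, contradicting the injectivity of $\pi|_L$. So $\pi(L)$ must fail to be discrete, meaning there is a sequence of distinct nonzero $v_n \in L$ with $\pi(v_n) \to 0$ in $V/W$. Since $Q$ factors through $\pi$ and is continuous on $V/W$, this forces $\hat{h}(v_n) = Q(v_n) \to 0$ with the $v_n$ pairwise distinct and non-torsion. To finish, I would invoke Northcott's theorem: because $\hat{h}$ differs from a Weil height by a bounded function and $K$ is a number field, the set $\{P \in \overline{E}(K) : \hat{h}(P) \leq C\}$ is finite for every $C > 0$, so the values $\hat{h}(P)$ for non-torsion $P \in \overline{E}(K)$ admit a positive lower bound. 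This contradicts $\hat{h}(v_n) \to 0$, establishing $W = 0$ and thus the positive definiteness of $\langle \cdot, \cdot \rangle$ on $V$.
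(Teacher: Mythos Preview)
Your proof is correct and follows the standard argument. Note, however, that the paper does not actually supply a proof of this proposition: it simply cites \cite[VIII.9.9.6]{aec} and moves on. What you have written is essentially the argument found in that reference (Silverman's Lemma VIII.9.7 combined with Northcott's finiteness theorem), so there is no meaningful divergence to discuss.
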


One can compute heights on elliptic curves efficiently with Algorithm 6.1 in \protect{\cite{heightalg}}.

\subsection{Computing Generators of the Unit Group}

We now detail algorithms to solve Questions \ref{questionone} and \ref{questiontwo} for elliptic curves. First we treat Question \ref{questionone}. In addition to the above theory on N\'eron--Tate heights, we will need the following theorem and subroutines.
\begin{theorem}[\protect{\cite{mahler}}, \protect{\cite[Theorem~4]{weyl}}]
\label{weylthm}
Suppose $L$ is a sublattice in $\mathbb{Z}^n$ of rank $m$. Fix some topological vector space norm on $\mathbb{R}^n$. For all $1 \leq k \leq m$, let $M_k$ denote the minimum size ball centered at the origin that contains $k$ linearly independent vectors in $L$. Then there exists a basis $\{x_1,\dots,x_n\}$ of $L$ such that for all $1 \leq k \leq m$, $|x_k| \leq (\frac{3}{2})^{k-1}M_k$.
\end{theorem}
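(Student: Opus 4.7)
The plan is to induct on $k$, constructing a basis of $L$ one vector at a time so that after $k$ steps we have a basis of the primitive rank-$k$ sublattice $L_k := L \cap \mathrm{span}_{\mathbb{R}}(v_1, \ldots, v_k)$, where $v_1, \ldots, v_m \in L$ are linearly independent vectors realizing the successive minima, i.e. $|v_k| \leq M_k$ for every $k$. Since $L$ has rank $m$ and $v_1, \ldots, v_m$ are linearly independent, they span $L \otimes \mathbb{R}$, so $L_m = L$ and the induction terminates with a basis of $L$ itself. (I suspect the statement should say ``basis $\{x_1,\ldots,x_m\}$'' rather than $\{x_1,\ldots,x_n\}$.)

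For the base case $k=1$, any nonzero lattice vector of minimum norm is automatically primitive: if $v_1 = c w$ with $w \in L$ and $|c| \geq 2$, then $|w| < |v_1|$ contradicts minimality. So $x_1 := v_1$ gives $|x_1| \leq M_1 = (3/2)^0 M_1$.

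For the inductive step, suppose $x_1, \ldots, x_{k-1}$ is a $\mathbb{Z}$-basis of $L_{k-1}$ satisfying $|x_i| \leq (3/2)^{i-1} M_i$. Pick $v_k \in L$ linearly independent from $v_1, \ldots, v_{k-1}$ with $|v_k| \leq M_k$. Because $L_k/L_{k-1} \cong \mathbb{Z}$ is free of rank one, there exists $y_0 \in L_k$ such that $\{y_0, x_1, \ldots, x_{k-1}\}$ is a $\mathbb{Z}$-basis of $L_k$; expand $v_k = c y_0 + \sum_{i=1}^{k-1} a_i x_i$ with $c, a_i \in \mathbb{Z}$, where $c \neq 0$ (since $v_k \notin L_{k-1} \otimes \mathbb{R}$), and negate $y_0$ if needed so that $c \geq 1$. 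For each $i$ choose $d_i \in \mathbb{Z}$ closest to $a_i/c$, giving $|c d_i - a_i| \leq c/2$, and define $x_k := y_0 + \sum_{i=1}^{k-1} d_i x_i$. The change of basis from $\{y_0, x_1, \ldots, x_{k-1}\}$ to $\{x_1, \ldots, x_k\}$ is unimodular, so $x_1, \ldots, x_k$ is a basis of $L_k$. The key identity is
\[
c x_k = v_k + \sum_{i=1}^{k-1} (c d_i - a_i) x_i,
\]
which by the triangle inequality (and $c \geq 1$) gives
\[
|x_k| \leq \frac{|v_k|}{c} + \frac{1}{2} \sum_{i=1}^{k-1} |x_i| \leq M_k + \frac{M_k}{2} \sum_{i=1}^{k-1} (3/2)^{i-1} = (3/2)^{k-1} M_k,
\]
using the inductive hypothesis, monotonicity $M_i \leq M_k$, and the geometric sum.

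The main technical obstacle is the possibility that $c$ is large: a naive attempt to set $x_k$ equal to a shortest representative of the coset $v_k + L_{k-1}$ fails because that coset need not contain any short vector. The device of dividing the equation $c y_0 = v_k - \sum a_i x_i$ by $c$ and rounding each $a_i/c$ to the nearest integer replaces this with the dilated coset $y_0 + L_{k-1}$, whose shortest representative can be bounded via the triangle inequality in terms of $|v_k|/c \leq M_k$ plus a controlled combination of the previously constructed basis vectors; this is precisely what produces the factor $(3/2)^{k-1}$.
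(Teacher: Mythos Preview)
Your proof is correct and is essentially the classical Mahler--Weyl argument. Note, however, that the paper does not actually prove this theorem: it is quoted as a known result with citations to Mahler and Weyl, and is used only as a black box inside the termination/correctness proof of Algorithm~\ref{ellcurvep1}. So there is no ``paper's own proof'' to compare against.

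Your observation that the statement should read $\{x_1,\dots,x_m\}$ rather than $\{x_1,\dots,x_n\}$ is also correct; $L$ has rank $m$, not $n$, so a basis has $m$ elements. This appears to be a typo in the paper.

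One minor remark on your write-up: in the inductive step you use the triangle inequality and the bound $|v_k|/c \le M_k$, which of course holds since $c \ge 1$. It may be worth noting explicitly that the argument works for an arbitrary norm on $\mathbb{R}^n$ (not just the Euclidean one), since the only properties used are the triangle inequality and homogeneity; this matches the generality of the statement.
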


\begin{subroutine}
\label{torsionrelationssubroutine}
\begin{algorithmic}[1]
\REQUIRE{A set of torsion points $T_1,\dots,T_r$ on an elliptic curve and torsion orders $m_1,\dots,m_n$}
\ENSURE{Generators for the lattice of relations among $T_1,\ldots, T_r$ in $\mathbb{Z}^r$}
\STATE{$D \gets \emptyset$}
\FORALL{$(n_1,\dots,n_r)$ where $0 \leq n_i \leq m_i$}
	\IF{$n_1 T_1 + \dots + n_r T_r = 0$} 
		\STATE {add $(n_1,\dots,n_r)$ to $D$} 
    \ENDIF
\ENDFOR
\RETURN{$D$}
\end{algorithmic}
\end{subroutine}
\Cref{torsionrelationssubroutine} correctly computes all relations among a set of torsion points, as it simply manually checks all possible relations.

\begin{subroutine}
\label{nontorsionrelationssubroutine}
\begin{algorithmic}[1]
\REQUIRE{A set of torsion-free points $Q_1,\dots,Q_n$ on an elliptic curve}
\ENSURE{Generators in $\mathbb{Z}^n$ for the lattice of relations among the $Q_i$ in $\overline{E}(\overline{\mathbb{Q}})/\tors$}
\STATE{Compute the $n \times n$ matrix $A$ such that the $A_{i, j} \gets \langle Q_i,Q_j \rangle = \hat{h}(Q_i + Q_j) - \hat{h}(Q_i) - \hat{h}(Q_j)$}
\RETURN{generators of $\ker A \cap \mathbb{Z}^n$}
\end{algorithmic}
\end{subroutine}

\begin{lemma}
\Cref{nontorsionrelationssubroutine} correctly computes the lattice of relations among the nontorsion points $Q_1, \dots, Q_n$ in $\overline{E}(\overline{\mathbb{Q}})/\tors$.
\end{lemma}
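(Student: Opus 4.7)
The plan is to reduce the problem to linear algebra by exploiting the positive-definite structure of the N\'eron--Tate pairing. Choose a number field $K$ over which $\overline{E}$ and all $Q_i$ are defined, so that $Q_i \in \overline{E}(K)$. By the Mordell--Weil theorem $\overline{E}(K)$ is finitely generated, so $V := \overline{E}(K) \otimes_{\mathbb{Z}} \mathbb{R}$ is a finite-dimensional real vector space. Torsion becomes zero upon tensoring with $\mathbb{R}$, and the N\'eron--Tate pairing extends to a positive-definite inner product on $V$ by \cite[VIII.9.9.6]{aec}.

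By \Cref{nerontatetheorem}(3), $\langle P, P \rangle = 2\hat{h}(P)$; expanding $P := \sum_i n_i Q_i$ bilinearly yields $\mathbf{n}^T A \mathbf{n} = \langle P, P \rangle = 2\hat{h}(P)$ for $\mathbf{n} = (n_1, \ldots, n_n)^T$. By \Cref{nerontatetheorem}(4), $P$ is torsion iff $\hat{h}(P) = 0$ iff $\mathbf{n}^T A \mathbf{n} = 0$. Since $P \in \overline{E}(K)$, being torsion in $\overline{E}(\overline{\mathbb{Q}})$ is the same as being torsion in $\overline{E}(K)$, so $(n_1, \ldots, n_n)$ is a relation among the $Q_i$ in $\overline{E}(\overline{\mathbb{Q}})/\tors$ precisely when $\mathbf{n}^T A \mathbf{n} = 0$.

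It remains to observe that $\mathbf{n}^T A \mathbf{n} = 0$ if and only if $A\mathbf{n} = 0$. This is the standard fact that the kernel of the Gram matrix of a tuple of vectors in a positive-definite inner product space records the linear relations among those vectors: if $A\mathbf{n} = 0$ then $P$ is orthogonal to every $Q_i$ and hence to itself, so $\langle P, P \rangle = 0$; conversely, if $\langle P, P \rangle = 0$ then positive definiteness forces $P = 0$ in $V$, whence $\langle Q_i, P \rangle = 0$ for all $i$, i.e.\ $A\mathbf{n} = 0$. It follows that $\ker A \cap \mathbb{Z}^n$ is exactly the lattice of integer relations among the $Q_i$ modulo torsion, as desired.

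The main obstacle is little more than bookkeeping: the question is posed in $\overline{E}(\overline{\mathbb{Q}})/\tors$, whereas the positive-definite structure lives on $V = \overline{E}(K) \otimes_{\mathbb{Z}} \mathbb{R}$, and one must pass cleanly between the two. Once this is set up, the rest is a one-line linear-algebra observation driven by positive definiteness.
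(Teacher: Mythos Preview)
Your proof is correct and follows essentially the same approach as the paper: pass to a number field $K$ containing all the $Q_i$, use that the N\'eron--Tate pairing is positive definite on $\overline{E}(K)\otimes\mathbb{R}$, and identify $\ker A\cap\mathbb{Z}^n$ with the relations modulo torsion. You have simply spelled out in full the Gram-matrix step (that $\mathbf{n}^T A\mathbf{n}=0\iff A\mathbf{n}=0$ via positive definiteness) which the paper compresses into a single clause.
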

\begin{proof}
Choose some number field $K$ large enough such that $\{Q_1,\dots,Q_n\} \subseteq \overline{E}(K)$. Note that $\overline{E}(K)$ modulo torsion embeds into $\overline{E}(K) \otimes \mathbb{R}$. By \Cref{nerontatetheorem} (3), $A$ is the inner product matrix of a nondegenerate inner product, and thus $\ker A \cap \mathbb{Z}^n$ comprises the relations among the $Q_i$ up to torsion.
\end{proof}

We are now ready to solve \Cref{questionone} for elliptic curves.

\begin{algorithm}[Answering Question 1 for elliptic curves]
\item[]
\label{ellcurvep1}
\begin{algorithmic}[1]
\REQUIRE{An elliptic curve over $\overline{\mathbb{Q}}$ with a nonempty finite set of distinguished points $S \subseteq \overline{E}(\overline{\mathbb{Q}})$ and a base point $O$}
\ENSURE{A minimal generating set of $\ker(\Div_S^0(\overline{E}) \rightarrow \Cl^0(\overline{E}))$}
\STATE{Determine the torsion points of $S$ using heights. Let $Q_1,\dots,Q_n$ refer to torsion-free points, and let $T_1,\dots,T_r$ refer to torsion points with orders $m_1, \dots, m_r$ respectively.}
\STATE{$D \gets \emptyset \subseteq \mathbb{Z}^{n+r}$}
\STATE{$G \gets$ finite subgroup generated by $ (T_1,\dots,T_r) \subseteq \overline{E}(\overline{\mathbb{Q}})$}
\STATE{$D_T \gets$ relations between $T_1,\dots,T_r$ as given by \Cref{torsionrelationssubroutine}}
\FORALL{$(n_1,\dots,n_r)\in D_T$}
    \STATE {add $(0,\dots,0,n_1,\dots,n_r)$ to $D$}
\ENDFOR
\STATE{$D_Q \gets$ relations modulo torsion between $Q_1,\dots,Q_n$ as given by \Cref{nontorsionrelationssubroutine}}
\STATE{$\ell \gets \rank(span_{\mathbb{Z}}(D_Q))$}
\STATE{$\lambda \gets 0$, $S_{\lambda} \gets \emptyset \subset \ZZ^n$}
\WHILE{$\rank(span_{\ZZ}(S_{\lambda})) \neq \ell$} 
	\STATE{$\lambda \gets \lambda + 1$}
    \STATE{$S_{\lambda} \gets \{(m_1,\dots,m_n) \in span_{\mathbb{Z}}(D_Q) \mid \sqrt{\sum m_i^2} \leq \lambda \quad \textrm{and} \quad m_1 Q_1 + \dots + m_n Q_n \in G\}$}
\ENDWHILE
\STATE{$\Lambda \gets \{(m_1,\dots,m_n) \in span_{\mathbb{Z}}(D_Q) \mid \sqrt{\sum m_i^2} \leq {(\frac{3}{2})^{k-1}\lambda} \quad \textrm{and} \quad m_1 Q_1 + \dots + m_n Q_n \in G\}$}
\FOR{$(m_1,\dots,m_n) \in \Lambda$} 
	\STATE {Choose $(n_1,\dots,n_r)$ such that $m_1 Q_1 + \dots + m_n Q_n + n_1 T_1 + \dots + n_r T_r = 0$}
    \STATE {add $(m_1,...,m_n,n_1,\dots,n_r)$ to $D$}
\ENDFOR
\STATE{$L \gets \{(m_1,\dots,m_n,n_1,\dots,n_r) \in \mathbb{Z}^{n+r} \mid \sum m_i + \sum n_j = 0\}$}
\RETURN{a minimal set of generators for $span_{\mathbb{Z}}(D) \cap L$}
\end{algorithmic}
\end{algorithm}

\begin{lemma}
\label{weierstraussq1}
For a distinguished set $S$ of $\overline{\mathbb{Q}}$-points on the elliptic curve $E$, \Cref{ellcurvep1} correctly computes a minimal generating set of the kernel of the map $\Div_S^0(\overline{E}) \rightarrow \Cl^0(\overline{E})$.

\end{lemma}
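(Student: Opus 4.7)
The plan is to translate the algebraic question into a question about relations in the group $\overline{E}(\overline{\mathbb{Q}})$ via the Abel--Jacobi isomorphism $\overline{E}(\overline{\mathbb{Q}}) \xrightarrow{\sim} \Cl^0(\overline{E})$, $P \mapsto [P] - [O]$. Under this isomorphism, a divisor $\sum a_i P_i \in \Div_S^0(\overline{E})$ (so $\sum a_i = 0$) maps to zero in $\Cl^0(\overline{E})$ if and only if $\sum a_i P_i = O$ on $\overline{E}$. The desired kernel is thus the intersection, inside $\mathbb{Z}^S$, of the degree-zero lattice $L$ with the lattice $\Lambda$ of all integer relations $\sum a_i P_i = O$ among the points of $S$. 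So it suffices to show that the algorithm correctly computes generators of $\Lambda$, after which intersecting with $L$ and extracting a minimal generating set (e.g.\ via Hermite normal form) finishes the argument.

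Partition $S = \{Q_1, \dots, Q_n\} \sqcup \{T_1, \dots, T_r\}$ into torsion-free and torsion points, which is effective because \Cref{nerontatetheorem}(4) characterizes torsion as $\hat{h} = 0$. The lattice $\Lambda \subseteq \mathbb{Z}^{n+r}$ sits in a short exact sequence
\[
0 \to D_T \to \Lambda \to \Lambda_0 \to 0,
\]
where $D_T \subseteq \{0\} \times \mathbb{Z}^r$ is the lattice of torsion-only relations and $\Lambda_0 = \{(m_i) \in D_Q \mid \sum m_i Q_i \in G\}$, with $D_Q$ the lattice of relations modulo torsion among the $Q_i$. \Cref{torsionrelationssubroutine} produces $D_T$ by exhaustive search, justified since $G$ is finite. \Cref{nontorsionrelationssubroutine} produces $D_Q$ because, by \Cref{nerontatetheorem}(3), the N\'eron--Tate pairing descends to a positive definite form on $\overline{E}(K)/\tors \otimes \mathbb{R}$, so the kernel of the Gram matrix $A$ intersected with $\mathbb{Z}^n$ is exactly $D_Q$. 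It remains to show that the vectors added from the algorithm's set $\Lambda$ (the capital-$\Lambda$ internal to the algorithm, not the full relation lattice) generate $\Lambda_0$, and that each admits a torsion lift.

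For the first claim, $\Lambda_0$ has finite index in $D_Q$ because the image of $D_Q$ in the finite group $\overline{E}(K)_\tors/(G \cap \overline{E}(K)_\tors)$ is finite, so the two lattices have equal rank $\ell$. I would verify that the while loop terminates at some $\lambda \geq M_\ell$, the $\ell$-th successive minimum of $\Lambda_0$ in the Euclidean norm, and then apply \Cref{weylthm} to obtain a basis of $\Lambda_0$ with all vectors of norm at most $(3/2)^{\ell-1} M_\ell \leq (3/2)^{\ell-1} \lambda$. Thus the set collected in the algorithm contains such a basis, and its $\mathbb{Z}$-span equals $\Lambda_0$. For the lift, any $(m_1, \dots, m_n) \in \Lambda_0$ satisfies $\sum m_i Q_i \in G$ by definition, so a suitable $(n_1, \dots, n_r)$ exists and can be found by enumerating the finite group $G$.

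The main obstacle is the Weyl step: one must convert the algebraic rank computation of \Cref{nontorsionrelationssubroutine} into an effective, terminating search for a generating set. The combination of positive definiteness (which pins down the rank via the Gram matrix) and \Cref{weylthm} (which bounds the basis norm by a uniform multiple of the successive minimum) is exactly what makes this possible. Minor bookkeeping---such as the algorithm incrementing $\lambda$ over integers rather than over the discrete set of norms of lattice vectors in $\Lambda_0$, and reading the exponent $(3/2)^{k-1}$ with $k = \ell$---is absorbed by the monotonicity of the Weyl bound in $\lambda$.
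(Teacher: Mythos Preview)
Your proposal is correct and follows essentially the same route as the paper: both reduce to computing the relation lattice $\ker\psi$ via the projection to the first $n$ coordinates, identify that projection with $\{(m_i)\in D_Q:\sum m_iQ_i\in G\}$, argue equal rank with $D_Q$ to establish termination, invoke \Cref{weylthm} to guarantee the bounded-norm search captures a basis, and then lift by adjusting torsion coordinates. Your short-exact-sequence packaging $0\to D_T\to\Lambda\to\Lambda_0\to 0$ and explicit Abel--Jacobi reduction are a slightly cleaner presentation of exactly the argument the paper gives with $\pi_{\{1,\dots,n\}}(\ker\psi)$, and your reading of the undefined $k$ in line~15 of the algorithm as $k=\ell$ matches how the paper applies \Cref{weylthm}.
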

\begin{proof}
We first prove that the algorithm terminates.  Let $\psi$ denote the map $\Div_{S}(\overline{E}) \rightarrow \Cl_{S}(\overline{E})$, and let $\psi_0$ denote the restriction $\Div_{S}^0(\overline{E}) \rightarrow \Cl_{S}^0(\overline{E})$. Identify $\Div_{S}^0(\overline{E}) \cong \ZZ\langle Q_1,\dots,Q_n,T_1,\dots,T_r\rangle$ with $\mathbb{Z}^{n+r}$ using this ordering of elements in $S$. For any subset $M \subseteq \{1,\dots,n+r\}$, let $\pi_{M}$ denote the projection onto those coordinates.

Note that $\pi_{\{1,\dots,n\}}(\ker \psi) \subseteq span_{\mathbb{Z}}(D_Q)$. In fact $\pi_{\{1,\dots,n\}}(\ker \psi)$ has the same rank as $span_{\ZZ}(D_Q)$; if $(m_1,\dots,m_n) \in span_{\mathbb{Z}}(D_Q)$, then $m_1 Q_1 + \dots +m_n Q_n \in G$ and thus is torsion. It follows that there exists some $m \in \mathbb{Z}$ such that $mm_1 Q_1 + \dots + mm_n Q_n = 0$, so that $(mm_1,\dots,mm_n) \in \pi_{\{1,\dots,n\}}(\ker \psi)$. Hence there exists a $\lambda$ large enough to exit the while loop, and the algorithm terminates. 

We now show the correctness of the algorithm. We claim that $\pi_{\{1,\dots,n\}}(\ker \psi) = span_{\mathbb{Z}}(\Lambda)$. Note by definition that $\Lambda \subseteq \pi_{\{1,\dots,n\}}(\ker \psi)$ so $span_{\mathbb{Z}}(\Lambda) \subseteq \pi_{\{1,\dots,n\}}(\ker \psi)$. By \Cref{weylthm}, as $S_\lambda$ contains at least $\ell$ linearly independent elements, $\Lambda$ will contain a lattice basis of $\pi_{\{1,\dots,n\}}(\ker \psi)$. Thus $\pi_{\{1,\dots,n\}}(\ker \psi) = span_{\mathbb{Z}}(S)$.

Next we show that $span(D) = \ker \psi$. Clearly $span(D) \subseteq \ker\psi$ by construction. Suppose $(m_1,\dots,m_n,n_1,\dots,n_r) \in \ker \psi$. Then $(m_1,\dots,m_n) \in \pi_{\{1,\dots,n\}}(\ker \psi)= span_{\mathbb{Z}}(\Lambda)$, so there exist $n_1',\dots, n_r'$ such that $(m_1,\dots,m_n,n_1',\dots,n_r') \in span(D) \subseteq \ker \psi$. Thus, $(0,\dots,0,n_1 - n_1',\dots,n_r - n_r') \in \ker \psi$. However, $(0,\dots,0,n_1 - n_1',\dots,n_r - n_r') \in span(D)$ because of \Cref{torsionrelationssubroutine}, so 
\[
(m_1,\dots,m_n,n_1',\dots,n_r') + (0,\dots,0,n_1 - n_1',\dots,n_r - n_r') = (m_1,\dots,m_n,n_1,\dots,n_r) \in span(D).
\]
\noindent To conclude, we note that $\ker \psi_0 = \ker \psi \cap L = span_{\mathbb Z}(D)\cap L$.
\end{proof}

We now turn our attention to answering Question \ref{questiontwo}. The following is an explicit version of Miller's algorithm, specialized to genus $1$ \protect{\cite{miller}}.

\begin{algorithm}[Answering Question 2 for elliptic curves]
\item[]
\label{ellcurvep2}
\begin{algorithmic}[1]
\REQUIRE{An elliptic curve $\overline E$ with basepoint $O$ and a divisor $D \in \Div^0(\overline{E})$}
\ENSURE{Whether $D$ is in the image $R^*/k^*$, and an element of $\Frac(\overline{R})^*$ mapping to $D$ if it is}
\STATE{$f \gets 1$}
\WHILE{$|D| \neq 0$}
\IF{$\exists P, Q$ such that $n_P, n_Q > 0$ \AND $P \neq -Q$}
	\STATE{$D \gets D - (P + Q + (-P+Q) - 3O)$}
    \STATE{$f \gets fL$ where $L$ is the line through $P$ and $Q$}
\ELSIF{$\exists P, Q$ such that $n_P, n_Q > 0$ \AND $P = -Q$}
	\STATE{$D \gets D - (P + Q - 2O)$}
    \STATE{$f \gets fL$ where $L$ is the line through $P$ and $Q$}
\ELSIF{$\exists P, Q$ such that $n_P, n_Q < 0$ \AND $P \neq -Q$}
	\STATE{$D \gets D + (P + Q + (-P+Q) - 3O)$}
    \STATE{$f \gets f/L$ where $L$ is the line through $P$ and $Q$}
\ELSIF{$\exists P, Q$ such that $n_P, n_Q < 0$ \AND $P = -Q$}
	\STATE{$D \gets D + (P + Q - 2O)$}
    \STATE{$f \gets f/L$ where $L$ is the line through $P$ and $Q$}
\ELSIF{$D$ is of the form $mP - nQ + oO$ for $m, n \geq 0$}
	\IF{$m \geq 2$ \AND $P$ has order $3$}
    	\STATE{$D \gets D - (3P - 3O)$}
    	\STATE{$f \gets fL$ where $L$ is the tangent line at $P$}
    \ELSIF{$m \geq 2$ \AND $P$ does not have order $2$}
    	\STATE{$D \gets D - (2P + (-2P) - 3O)$}
    	\STATE{$f \gets fL$ where $L$ is the tangent line at $P$}
    \ELSIF{$m \geq 2$ \AND $P$ has order $2$}
    	\STATE{$D \gets D - (2P - 2O)$}
    	\STATE{$f \gets fL$ where $L$ is the tangent line at $P$}
   	\ELSIF{$n \geq 2$ \AND $Q$ has order $3$}
    	\STATE{$D \gets D + (3Q - 3O)$}
    	\STATE{$f \gets f/L$ where $L$ is the tangent line at $Q$}
    \ELSIF{$n \geq 2$ \AND $Q$ does not have order $2$}
    	\STATE{$D \gets D + (2Q + (-2Q) - 3O)$}
    	\STATE{$f \gets f/L$ where $L$ is the tangent line at $Q$}
    \ELSIF{$n \geq 2$ \AND $Q$ has order $2$}
    	\STATE{$D \gets D - (2Q - 2O)$}
    	\STATE{$f \gets f/L$ where $L$ is the tangent line at $Q$}
    \ELSIF{$m = 1$ \AND $n=1$}
    	\STATE{$D \gets D + (Q + (-Q) - 2O)$}
    	\STATE{$f \gets f/L$ where $L$ is the line through $Q$ and $-Q$}
    \ELSIF{($m=1$ \AND $n=0$) \OR ($m=0$ \AND $n=1$)}
    	\RETURN{this divisor is not in the image of $R^*/k^*$}
    \ENDIF
\ENDIF
\ENDWHILE
\RETURN{$f$}
\end{algorithmic}
\end{algorithm}

\begin{lemma}
\label{weierstraussq2}
\Cref{ellcurvep2} correctly determines whether a divisor $D$ is in the image of $R^*/k^*$ and computes an element $f$ of $Frac(\overline R)^*$ mapping to $D$ if so.
\end{lemma}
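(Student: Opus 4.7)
The plan is to establish a loop invariant yielding correctness and a complexity measure yielding termination. Fix once and for all a linear form $L_0 \in k[x,y,z]$ vanishing to order one on $O$, so that for any line $L \subseteq \mathbb{P}^2$ the rational function $L/L_0$ on $\overline{E}$ has divisor readable from the group law: for distinct $P,Q \in \overline{E}$ with $P \neq -Q$, the line $L$ through them meets $\overline{E}$ in the third point $R = -(P+Q)$, giving $\divs(L/L_0) = P + Q + R - 3O$; when $P = -Q$ the chord is vertical and $\divs(L/L_0) = P + Q - 2O$; and the tangent line at a point $P$ of order $\neq 2$ (resp.\ exactly $2$, resp.\ exactly $3$) has divisor $2P + (-2P) - 3O$ (resp.\ $2P - 2O$, resp.\ $3P - 3O$). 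Each conditional branch of the algorithm corresponds to exactly one such identity: the assignment to $D$ subtracts it and the assignment to $f$ multiplies (or divides) by the corresponding linear form.

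The loop invariant we maintain is that at the top of each iteration, $D_0 = D + \divs(f)$ in $\Div^0(\overline{E})$, where $D_0$ is the input divisor. This holds trivially on entry ($f=1$, $D=D_0$) and is preserved by each branch via the identities just stated. On exit at $D = 0$ the invariant gives $\divs(f) = D_0$, so $D_0$ is principal and $f$ is a valid rational-function preimage. On exit via the \textsc{False} branch, $D$ has been reduced to a divisor of the form $\pm([P] - [O])$ with $P \neq O$; by Abel's theorem on $\overline{E}$, $[P] - [O]$ is principal iff $P = O$, so $D$ is not principal, and since $D_0 - D = \divs(f)$ is principal, neither is $D_0$. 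Combining this with \Cref{clsubgroup}, which identifies the image of $\phi\colon R^*/k^*\hookrightarrow \Div^0_\partial(\overline{E})$ with the kernel of the map to $\Cl^0(\overline{E})$ (the principal divisors supported on $\partial E$), we conclude that $D_0$ lies in the image of $R^*/k^*$ if and only if the algorithm returns an $f$, in which case $f$ is a preimage in $\Frac(\overline{R})^*$.

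For termination, the natural complexity measure is $\mu(D) := \sum_{P \neq O} |n_P|$, where $n_P$ is the coefficient of $P$ in $D$. A branch-by-branch check shows $\mu$ never increases, with strict decrease in all but a small family of edge cases --- most notably the $m=n=1$ sub-branch when $P = -Q$, where $\mu$ can momentarily stall. In those cases one verifies by tracking the subsequent iteration that strict decrease is regained within at most two steps: for instance, after the $m=n=1$ update with $P = -Q$, $D$ becomes $2[P] + (o-2)[O]$, which the $m \geq 2$ tangent case then reduces to $-[-2P] + (o+1)[O]$, dropping $\mu$ from $2$ to $1$. Grouping such consecutive pairs of iterations together supplies a well-founded measure that bounds the total iteration count in terms of $\mu(D_0)$.

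The main obstacle is precisely this termination analysis in the fifth outer branch, where $D = mP - nQ + oO$ has exactly one positive and one negative non-$O$ coefficient. The algorithm branches into up to seven sub-cases depending on the torsion orders of $P$ and $Q$, and for each one must check both that the chosen tangent or chord divisor matches the formula asserted and that $\mu$ strictly decreases (possibly only after chaining two iterations). In particular one has to rule out pitfalls such as the auxiliary point $-2P$ coinciding with $P$ (which would force $P$ to have order $3$ and is handled by a dedicated branch) or the third intersection $R=-(P+Q)$ coinciding with $P$ or $Q$ in the earlier branches; each of these coincidences must be shown to be compatible with the invariant and the decrease of $\mu$.
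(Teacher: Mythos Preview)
Your proof is correct and follows essentially the same approach as the paper: the same loop invariant $D_0 = D + \divs(f)$ (the paper phrases it as ``$D - \phi(f)$ is a loop invariant''), the same complexity measure $\mu(D) = \sum_{P \neq O}|n_P|$ (the paper's $|D|$), and the same observation that the $m=n=1$ branch may stall for one step but not two. You supply more granular detail on the coincidence cases (e.g.\ $-2P = P$, $P = -Q$) that the paper leaves implicit, but the overall architecture is identical.
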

\begin{proof}
Let $\phi$ denote the map $R^*/k^* \xhookrightarrow{} \Div_0(\overline{E})$. Note that the quantity $D - \phi(f)$ is a loop invariant. Note additionally that during every execution of the loop, exactly one of the conditionals is satisfied; if line 3, 6, 9, and 12 are not satisfied, then $D$ must be of the form $mP - nQ + oO$ for $m,n \geq 0$. If $D = mP - nQ + oO$ then exactly one of line 16, 19, 22, 25, 28, 31, 34, or 37 must be satisfied. $|D|$ is strictly reduced during each iteration unless line 34 or line 37 are satisfied. Line 37 terminates the program. Line 34 cannot be satisfied in two consecutive loops. Thus the algorithm will terminate.

Assume $D \in \phi(R^*/k^*)$. If at some point in execution $|D| = 0$, then as $D$ is degree $0$, $D = 0$ and so $D = \phi(f)$. If, during the execution of the algorithm, line 37 is satisfied, then some point is linearly equivalent to the origin, which is a contradiction. Hence the algorithm outputs an element $f$ with the desired property.

Now assume $D \notin \phi(R^*/k^*)$. Because of the loop invariant $D - \phi(f)$, we will never have $|D| = 0$. Because the algorithm terminates, it must terminate at line 37, as desired.
\end{proof}

Putting together Lemmas \ref{weierstraussq1} and \ref{weierstraussq2} and \ref{questionthreelemma}, we obtain the following result:

\begin{algorithm}[Computing unit groups of elliptic curves]
\label{ellcurvealg}
\item[]
\begin{algorithmic}
\REQUIRE{An elliptic curve over $\overline{\mathbb{Q}}$ with 
boundary points $\partial$
and a base point $O$}
\ENSURE{A basis of $R^*/k^*$}
\STATE $V \gets$ a generating set of the image of $R^*/k^*\hookrightarrow \Div_{\partial}^0(\overline E)$ by \Cref{ellcurvep1}
\STATE $B \gets \emptyset$
\FORALL{$v\in V$}
	\STATE $f/g \gets$ rational function with divisor $v$ using \Cref{ellcurvep2}
    \STATE $B\gets B\cup \{h\}$, where $h$ is a Laurent polynomial with the same divisor as $f/g$ by \Cref{computingRepOfUnit}
\ENDFOR
\RETURN{$B$}
\end{algorithmic}
\end{algorithm}

\ellipticcurveqonesoln*

\begin{remark}
Many of the algorithms presented in this section are most easily implemented (e.g. in Sage \cite{sagemath}) for elliptic curves in Weierstrass form. Given a projective isomorphism of $\overline E$ to a Weierstrass form $\overline{W}$ as $\varphi \colon \overline{E} \rightarrow \overline{W}$, we can compute relations among the points in $\varphi(\partial E)$ using \Cref{ellcurvep1}. These relations can be pulled back by $\varphi^{-1}$ to all relations among the points in $\partial E$, because $\varphi$ induces an isomorphism $\Div_{\partial}^0(\overline{E}) \cong \Div_S^0(\overline{W})$. 
\end{remark}

\begin{exam}
\label{ellcurveexample}
Let $E$ be the very affine elliptic curve $E = \Spec(\overline{\mathbb{Q}_5}[x^{\pm 1}, y^{\pm 1}]/(y^2 - (x-1)(x+1)(x-4))$ with basepoint $[0:1:0]$. We compute the following six boundary points of $\overline{E} \subseteq \mathbb{P}^2$:

\begin{enumerate}
\item $Q_1 \coloneqq [0: 2:1]$
\item $Q_2 \coloneqq [0: -2:1]$
\item $T_1 \coloneqq [0:1:0]$
\item $T_2 \coloneqq [1: 0:1]$
\item $T_3 \coloneqq [-1: 0:1]$
\item $T_4 \coloneqq [4: 0:1]$
\end{enumerate}

$T_1$ is the identity on $E$ and has torsion order 1; $T_2, T_3,$ and $T_4$ have torsion order 2; and $Q_1$ and $Q_2$ are nontorsion. Algorithm \ref{ellcurvealg} yields the following generating set for the lattice of relations, with corresponding units:

\begin{enumerate}
\item $[1, 1, 1, 1, -2, -2] \rightsquigarrow -y/x^2$
\item $[0, 2, 0, 0, -1, -1] \rightsquigarrow (x-1)/x$
\item $[0, 0, 2, 0, -1, -1] \rightsquigarrow (x+1)/x$
\item $[0, 0, 0, 2, -1, -1] \rightsquigarrow (x-4)/x$
\end{enumerate}
Note that it is easy to find these units by inspection, but to check that these form a basis of $R^*/k^*$, we rely on the algorithms given in this section.

We can easily compute the tropicalization of the elliptic curve $E \subseteq \mathbb{T}^2$ defined by the equation $y^2 = (x-1)(x+1)(x-4)$ to be three rays emerging from the origin, as seen in Figure \ref{bad-tropicalization-of-elliptic}:
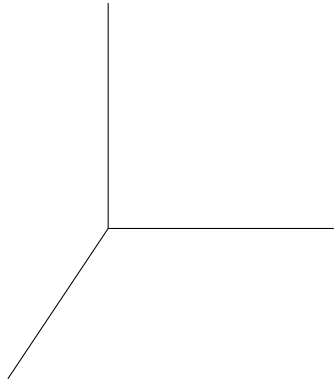
\begin{figure}[h!]
\begin{center}
\begin{tikzpicture}

\draw (3,3) -- (1.66666666666,1);
\draw (6,3) -- (3,3);
\draw (3,6) -- (3,3);
\end{tikzpicture}
\end{center}
\caption{The tropicalization of the elliptic curve in Example \ref{ellcurveexample}.}
\label{bad-tropicalization-of-elliptic}
\end{figure}

Using the unit group basis $\{x, y, x-1, x+1\}$, we compute the intrinsic tropicalization of $E$ in $\mathbb{T}^4$ with Singular, shown in Figure \ref{intrinsic-tropicalization-of-elliptic}:

\begin{figure}[h!]
\begin{center}
\begin{tikzpicture}

\draw (3,3) -- (1,1);
\draw (6,3) -- (3,3);
\draw (3,6) -- (3,3);
\draw (3,3) -- (6,4.5);
\draw (1,1) -- (-2, 1);
\draw (1,1) -- (1, -2);
\end{tikzpicture}
\end{center}
\caption{The intrinsic tropicalization of the elliptic curve in \Cref{ellcurveexample}.}
\label{intrinsic-tropicalization-of-elliptic}
\end{figure}
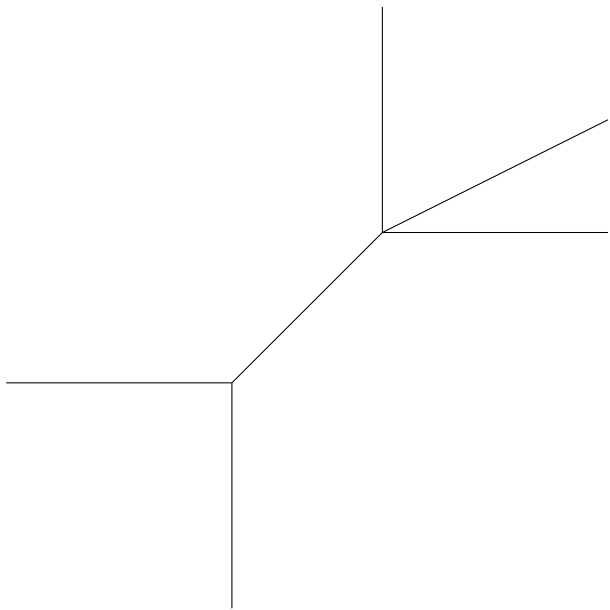

In particular, the intrinsic tropicalization is larger than the original. On the other hand, the $j$-invariant can be computed to be $j(E) = 438976/225$, so that its $5$-adic valuation is -2. It follows from Chan and Sturmfels \cite{chansturmfels} that $\overline E$ can be projectively re-embedded so that its tropicalization is in honeycomb form. The intrinsic tropicalization of $E$ does not retain this information, as this projective re-embedding of $\overline E$ does not preserve our dehomogenization procedure. In particular, intrinsic tropicalizations need not be faithful.
\end{exam}

\subsection{Hyperelliptic curves}
The N\'eron--Tate canonical height can more generally be defined on any abelian variety defined over any number field. Fix some curve $X$ defined over $\overline{\mathbb{Q}}$. Choose a number field $K$ such that $\partial X \subseteq X(K)$. Letting $J$ denote the Jacobian of $X$, the N\'eron--Tate canonical height pairing on $J(\overline{\mathbb{Q}})$ induces a positive definite inner product on $J(K) \otimes \mathbb{R}$.
For curves of genus $2$, Cassels, Flynn, and Smart provide an algorithm to compute the canonical height in \protect{\cite{casselsflynn}} and \protect{\cite{flynnsmart}}, which has since been implemented in Magma. For hyperelliptic curves of genus $3$, Stoll \protect{\cite{stoll}} describes such an algorithm with a corresponding Magma implementation. Additionally, Holmes \protect{\cite{holmes}} has provided a height algorithm for all hyperelliptic curves. Another algorithm to compute heights for all hyperelliptic curves has been provided by M\"{u}ller \protect{\cite{muller}}. However, a hyperelliptic curve of the form $y^2 = f(x)$ in $\mathbb{P}^2$ with $\deg f \geq 4$ has a singularity at infinity, and thus the methods used for elliptic curves do not immediately generalize.
 
\bibliographystyle{amsalpha}
\def\bibfont{\small}
\bibliography{Master.bib}

\providecommand{\bysame}{\leavevmode\hbox to3em{\hrulefill}\thinspace}
\providecommand{\MR}{\relax\ifhmode\unskip\space\fi MR }
\providecommand{\MRhref}[2]{%
  \href{http://www.ams.org/mathscinet-getitem?mr=#1}{#2}
}
\providecommand{\href}[2]{#2}
\begin{thebibliography}{DGPS18}

\bibitem[CF96]{casselsflynn}
J.W.S. {Cassels} and E.V. {Flynn}, \emph{Prolegomena to a middlebrow arithmetic
  of curves of genus 2}, London Mathematical Society Lecture Note Series,
  Cambridge University Press, 1996.

\bibitem[CLO15]{coxsheaolittle}
D.~Cox, J.~Little, and D.~O'Shea, \emph{Ideals, varieties, and algorithms},
  fourth ed., Undergraduate Texts in Mathematics, Springer-Verlag, Switzerland,
  2015.

\bibitem[Coh93]{cohenbook}
H.~Cohen, \emph{A course in computational algebraic number theory}, Graduate
  Texts in Mathematics, Springer-Verlag, 1993.

\bibitem[CS13]{chansturmfels}
M.~Chan and B.~Sturmfels, \emph{{Elliptic Curves in Honeycomb Form, \emph{in}
  Algebraic and Combinatorial Aspects of Tropical Geometry}}, Contemp. Math.
  \textbf{589} (2013), 87--107.

\bibitem[DGPS18]{DGPS}
W.~Decker, G-M Greuel, G.~Pfister, and H.~Sch\"onemann, \emph{{\sc Singular}
  {4-1-1} --- {A} computer algebra system for polynomial computations},
  \url{http://www.singular.uni-kl.de}, 2018.

\bibitem[Eis95]{Eisenbud}
D.~Eisenbud, \emph{Commutative algebra with a view towards algebraic geometry},
  first ed., Graduate Texts in Mathematics, Springer-Verlag, New York, 1995.

\bibitem[FS97]{flynnsmart}
E.V. {Flynn} and N.P. {Smart}, \emph{Canonical heights on the jacobians of
  curves of genus 2 and the infinite descent}, Acta Arith. \textbf{79} (1997),
  333--352.

\bibitem[Fuc60]{Fuchs}
L.~Fuchs, \emph{{Abelian groups}}, International Series of Monographs on Pure
  and Applied Mathematics, Pergamon Press, 1960.

\bibitem[GS]{M2}
Daniel~R. Grayson and Michael~E. Stillman, \emph{Macaulay2, a software system
  for research in algebraic geometry}, Available at
  \url{http://www.math.uiuc.edu/Macaulay2/}.

\bibitem[Hol12]{holmes}
D.~Holmes, \emph{Computing {N\'eron--Tate} heights of points on hyperelliptic
  jacobians}, J. Number Theory \textbf{132} (2012), 1295--1305.

\bibitem[Mah38]{mahler}
K.~Mahler, \emph{{On Minkowski's theory of reduction of positive definite
  quadratic forms}}, Q. J. Math. \textbf{9} (1938), 259--262.

\bibitem[Mil86]{miller}
V.~Miller, \emph{Short programs for functions on curves},
  \url{https://crypto.stanford.edu/miller/miller.pdf}, unpublished.

\bibitem[MS15]{tropicalbook}
D.~{Maclagan} and B.~{Sturmfels}, \emph{Introduction to tropical geometry},
  Graduate Studies in Mathematics, vol. 161, American Mathematical Society,
  2015.

\bibitem[MS16]{heightalg}
J.S. {M{\"u}ller} and M.~{Stoll}, \emph{Computing canonical heights on elliptic
  curves in quasi-linear time}, LMS J. Comput. Math. \textbf{19} (2016),
  391--405.

\bibitem[M{\"u}l13]{muller}
J.S. M{\"u}ller, \emph{Computing canonical heights using arithmetic
  intersection theory}, Math. Comp. \textbf{83} (2013), no.~285, 311--336.

\bibitem[Sam66]{samuel}
P.~Samuel, \emph{A propos du th\'eoreme des unit\'es}, Bulletin des Sciences
  Math\'ematiques \textbf{90} (1966), 89--96.

\bibitem[Sil09]{aec}
J.~H. Silverman, \emph{The arithmetic of elliptic curves}, second ed., Graduate
  Texts in Mathematics, Springer-Verlag, 2009.

\bibitem[{Sto}17]{stoll}
M.~{Stoll}, \emph{{An explicit theory of heights for hyperelliptic Jacobians of
  genus three}}, {Algorithmic and Experimental Methods in Algebra, Geometry,
  and Number Theory} (G.~{B{\"o}ckle}, W.~{Decker}, and G.~{Malle}, eds.),
  Springer-Verlag, 2017, pp.~665--715.

\bibitem[{The}18]{sagemath}
{The Sage Developers}, \emph{{S}agemath, the {S}age {M}athematics {S}oftware
  {S}ystem ({V}ersion 8.2)}, 2018, {\tt http://www.sagemath.org}.

\bibitem[Wey40]{weyl}
H.~Weyl, \emph{Theory of reduction for arithmetical equivalence}, Trans. Amer.
  Math. Soc. \textbf{48} (1940), 126--164.

\end{thebibliography}
\vspace{0.5cm}
\end{document}